\newcommand{\comment}[1]{}
\newcommand{\Z}{\mathbf{Z}}
\theoremstyle{plain}
\newtheorem{theorem}{Theorem}[section]
\newtheorem{lemma}[theorem]{Lemma}
\newtheorem{proposition}[theorem]{Proposition}
\theoremstyle{definition}
\newtheorem{definition}[theorem]{Definition}
\newtheorem{remark}[theorem]{Remark}
\begin{document}

\title[]{On the last fall degree of zero-dimensional Weil descent systems}
\author[]{Ming-Deh A. Huang (USC, mdhuang@usc.edu), Michiel Kosters (TL@NTU, kosters@gmail.com), Yun Yang (NTU, YANG0379@e.ntu.edu.sg), Sze Ling Yeo (I2R, slyeo@i2r.a-star.edu.sg)}
\address{}
\email{}
\urladdr{}
\date{\today}
\keywords{polynomial system, Gr\"ober basis, last fall degree, zero-dimensional, first fall degree, Weil descent, HFE, ECDLP}
\subjclass[2010]{13P10, 13P15}

\maketitle

\begin{abstract}
In this article we will discuss a new, mostly theoretical, method for solving (zero-dimensional) polynomial systems, which lies in between Gr\"obner basis computations and the heuristic first fall degree assumption and is not based on any heuristic. This method relies on the new concept of \emph{last fall degree}.

Let $k$ be a finite field of cardinality $q^n$ and let $k'$ be its subfield of cardinality $q$. Let $\mathcal{F} \subset k[X_0,\ldots,X_{m-1}]$ be a finite subset generating a zero-dimensional ideal. We give an upper bound of the last fall degree of the Weil descent system of $\mathcal{F}$, which depends on $q$, $m$, the last fall degree of $\mathcal{F}$, the degree of $\mathcal{F}$ and the number of solutions of $\mathcal{F}$, but not on $n$. This shows that such Weil descent systems can be solved efficiently if $n$ grows. In particular, we apply these results for multi-HFE and essentially show that multi-HFE is insecure.

Finally, we discuss that the degree of regularity (or last fall degree) of Weil descent systems coming from summation polynomials to solve the elliptic curve discrete logarithm problem might depend on $n$, since such systems without field equations are not zero-dimensional. 
\end{abstract}

\section{Introduction}
Let $k$ be a field and let $\mathcal{F} \subset R=k[X_0,\ldots,X_{m-1}]$ be a finite subset. Let $R_{\leq i}$ be the set of polynomials in $R$ of degree at most $i$. Suppose that we want to find the solutions of $\mathcal{F}$ in $\overline{k}^m$. 

One of the most common methods is the following. First fix a monomial order on $R$, such as the degree reverse lexicographic order, and then compute a Gr\"obner basis of the ideal generated by $\mathcal{F}$ using for example $F_4$ or $F_5$ \cite{FAU4, FAU5}. Then one computes a Gr\"obner basis for the lexicographic order using FGLM \cite{FAU6}. It is often very hard to estimate the complexity of such algorithms. The largest degree which one sees in such a computation of a Gr\"obner basis for the degree reverse lexicographic order is called the \emph{degree of regularity}, and this degree essentially determines the complexity of such algorithms.

One approach to obtain heuristic complexity bounds is the use of the so-called \emph{first fall degree assumption}. For $i \in \Z_{\geq 0}$, we let $V_{\mathcal{F},i}$ be the smallest $k$-vector space such that
\begin{enumerate}
\item
$\{f \in \mathcal{F}: \deg(f) \leq i \} \subseteq V_{\mathcal{F},i}$;
\item
if $g \in V_{\mathcal{F},i}$ and if $h \in R$ with $\deg(hg) \leq i$, then $hg \in V_{\mathcal{F},i}$.
\end{enumerate}
The first fall degree is defined to be the first $d$ such that $V_{\mathcal{F},d} \cap R_{\leq d-1} \neq V_{\mathcal{F},d-1}$ (and if it does not exist, it is defined to be $0$; note that this definition of the first fall degree differs slightly from most definitions as in \cite{PET}, but behaves a lot better). 
The heuristic claim is that the first fall degree is close to the degree of regularity for many systems (see for example \cite{PET}). A quote from \cite{DIN} is ``Our conclusions rely on no heuristic assumptions beyond the standard assumption that the Gr\"obner basis
algorithms terminate at or shortly after the degree of regularity'' (note that in \cite{DIN} the definition of degree of regularity coincides with the first fall degree definition of \cite{PET}). It is quite often easy to give an upper bound on the first fall degree, just by counting arguments (see \cite{DIN} for example). However, in \cite{KO14}, the second and third author of this article raise doubt to the first fall degree heuristic.

In the first part of this article we will try to rectify the situation. We will define the notion of \emph{last fall degree} (or \emph{maximal gap degree}), which is the largest $d$ such that $V_{\mathcal{F},d} \cap R_{\leq d-1} \neq V_{\mathcal{F},d-1}$. We denote the last fall degree of $\mathcal{F}$ by $d_{\mathcal{F}}$. If $\mathcal{F}$ is zero-dimensional with at most $e$ solutions over the algebraic closure of $k$, we show how one can solve the system using $V_{\mathcal{F},\max(d_{\mathcal{F}},e)}$ and monovariate factoring algorithms (Proposition \ref{666}). We will also prove different properties of the last fall degree, for example, that it is always bounded by the degree of regularity and that it behaves well with respect to certain operations (such as linear change of variables and linear change of equations). See Subsection \ref{906} for a comparison with other methods for solving systems, most notably with MutantXL.

In the second part of this article we will show one application of the last fall degree. Basically, if $k$ is a finite field of cardinality $q^n$ and $k'$ is its subfield of cardinality $q$, and $\mathcal{F}$ is zero-dimensional, then we show that the first fall degree of a Weil descent system of $\mathcal{F}$ to $k$ does not depend on $n$. This generalizes practical and mathematical results, if $m=1$ \cite{BET, DIN, FAU3, PET2}. This shows that some versions of multi-HFE (HFE stands for hidden field equations) are much easier to tackle than one would expect. Let us now give a precise formulation of the main theorem.

We denote by $Z(\mathcal{F})$ the set of zeros of $\mathcal{F}$ over $\overline{k}$.
For $r \in \Z_{\geq 0}$ and $c, t \in \Z_{\geq 1}$ we set 
\begin{eqnarray*}
\tau(r,c,t)=\max\left( \lfloor 2 t(c-1) \left(\log_c\left(  \frac{r}{2t}\right)+1 \right) \rfloor,0\right).
\end{eqnarray*}
Note that this function increases when $r$ increases.

\begin{theorem} \label{1234}
Let $k$ be a finite field of cardinality $q^n$. Let $\mathcal{F} \subset R$ be a finite subset. Let $I$ be the ideal generated by $\mathcal{F}$. Assume that the following hold:
\begin{itemize}
\item $I$ is zero-dimensional, say one has $|Z(\mathcal{F})| \leq s$;
\item $I$ is radical;
\item there is a coordinate $t$ such that the projection map $Z(\mathcal{F}) \to \overline{k}$ to coordinate $t$ is injective;
\end{itemize}
Let $\mathcal{F}_f'$ be the Weil descent system of $\mathcal{F}$ to the subfield $k'$ of cardinality $q$ using some basis of $k/k'$, together with the field equations (Subsection \ref{551}). Then one has
\begin{eqnarray*}
d_{\mathcal{F}'_f} \leq \max\left(\tau(\max(d_{\mathcal{F}}, \deg(\mathcal{F}), (m+1)s,1),q,m),m \cdot \tau(2s,q,1), q \right).
\end{eqnarray*}
\end{theorem}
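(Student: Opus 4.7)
The plan is to proceed in two stages. First, working over the big field $k$, I would leverage the three hypotheses---zero-dimensionality, radicality, and an injective coordinate projection---to bring $\mathcal{F}$ into a Shape-Lemma-type normal form consisting of one univariate polynomial in $X_t$ and $m-1$ equations $X_i = h_i(X_t)$. Second, I would push this normal form through Weil descent and analyze its last fall degree over $k'$, using the field equations to perform Frobenius-reduction arguments. The three summands inside the final maximum correspond respectively to: (a) the cost of recovering the Shape Lemma generators globally after Weil descent (the $\tau(\dots, q, m)$ term), (b) the cost of resolving each of the $m$ coordinate directions via a univariate Weil descent (the $m \cdot \tau(2s, q, 1)$ term), and (c) the trivial contribution $q$ coming from the field equations $Y_{ij}^q - Y_{ij}$ themselves.

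For Stage 1 (over $k$), the hypotheses imply by the classical Shape Lemma that $I = (g(X_t), \{X_i - h_i(X_t)\}_{i \neq t})$ with $\deg g = |Z(\mathcal{F})| \leq s$ and $\deg h_i < s$. The key content is to establish that all these generators already lie in $V_{\mathcal{F}, d^\ast}$ for $d^\ast = \max(d_\mathcal{F}, \deg(\mathcal{F}), (m+1)s, 1)$. The term $d_\mathcal{F}$ captures the degree at which the filtration $V_{\mathcal{F},\cdot}$ stabilizes relative to its own generators; the term $(m+1)s$ dominates the total degrees of the Shape-Lemma relations; and $\deg(\mathcal{F})$ ensures the starting generators themselves fit. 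A linear-algebra argument inside $V_{\mathcal{F}, d^\ast}$, combined with the injective projection hypothesis (which forces any element of $k[X_t] \cap I$ of minimal degree to be $g$), should supply this.

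For Stage 2 (Weil descent), I fix a basis of $k/k'$ (a normal basis is convenient because it makes Frobenius act by cyclic shift on the $Y_{i,j}$). Each Shape-Lemma generator then produces $n$ polynomials over $k'$. The central technical lemma---which I expect to be available earlier in the paper or provable by a direct Frobenius-and-field-equations iteration---states that for a single univariate polynomial of degree $d$ over $k$, its Weil descent together with the relevant field equation has last fall degree at most $\tau(2d, q, 1)$. Applied to $g(X_t)$ and to each $X_i - h_i(X_t)$ (which is linear in $X_i$, so once the $Y_{t,j}$ are pinned down by $g$, one is essentially doing a univariate reduction in the $Y_{i,j}$), this produces the bound $m \cdot \tau(2s, q, 1)$. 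The collective term $\tau(d^\ast, q, m)$ arises from applying the analogous bound directly to the full $m$-variable Weil descent of $\mathcal{F}$ at degree $d^\ast$, before passing through the Shape Lemma.

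The hardest part will be the interface between Stages 1 and 2: showing that the Weil descents of the Shape-Lemma polynomials actually appear inside $V_{\mathcal{F}'_f, \cdot}$ at the claimed degree. Two subtleties arise. First, the reduction from $\mathcal{F}$ to the Shape-Lemma form over $k$ must be realized by operations respected by the filtration $V_{\mathcal{F},\cdot}$ without syzygy blow-up beyond $(m+1)s$; the injective projection hypothesis is essential here. Second, across the Weil descent, the Frobenius-iterated consequences of $g$ must be assembled via the field equations in a way that does not compound degrees, and the function $\tau$ is tailored exactly to absorb the logarithmic cost of $n$ Frobenius iterations while keeping each elementary step of degree at most $q$. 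Once these are controlled, the final bound is the maximum of the three contributions together with the trivial $q$ floor from the field equations.
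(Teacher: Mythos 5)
Your overall architecture matches the paper's: pass to a triangular (Shape-Lemma) form $\{g(X_t)\}\cup\{X_i-h_i(X_t)\}$ over $k$ using zero-dimensionality, radicality and the injective projection, push it through Weil descent, and control degrees with a univariate division/GCD lemma (the paper's Lemma \ref{540} and Proposition \ref{901}) and the function $\tau$. But there is a genuine gap exactly at the point you yourself flag as ``the hardest part,'' and that point is where essentially all of the work in the paper's proof lives. Knowing that the Weil descents of the Shape-Lemma generators lie in $V_{\overline{\mathcal{F}}_f,r}$ for a controlled $r$ does not by itself bound the last fall degree of the descended system: the last fall degree is a statement about \emph{every} element $h$ of the descended ideal, and it is not the maximum of the last fall degrees of subsystems generating the same ideal. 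The paper closes this with a two-step reduction that your proposal does not supply: given an arbitrary $h$ in the descended ideal, (1) reduce $h$ variable by variable modulo the descended projection polynomials $\overline{\pi_{i,\mathcal{F}}}$ via Lemma \ref{540}, with a careful ordering of the partial weights $w_i$ so that the intermediate products stay below $\max(\deg(h),\,m\cdot\tau(2s,q,1),\ldots)$ --- this monomial reduction, not a per-coordinate univariate descent of the $X_i-h_i$, is the true source of the $m\cdot\tau(2s,q,1)$ term --- obtaining $\overline{h_1}$ with $\deg_{X_i}(h_1)<s$; then (2) transfer $h_1$ back over $k$ (Lemma \ref{943}), reduce it to zero against the triangular system $\mathcal{G}$ at degree at most $(m+1)s$ --- this quantity is a bound on $d_{\mathcal{G}}$, i.e.\ on the cost of reducing arbitrary ideal elements, not merely on ``the total degrees of the Shape-Lemma relations'' --- and push that reduction through Weil descent again with Lemma \ref{5551}. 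Without this round trip the three terms in the maximum cannot be assembled into a bound on $d_{\mathcal{F}'_f}$.

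Two smaller issues. Your plan to apply the univariate key lemma ``to each $X_i-h_i(X_t)$'' does not parse as stated, since these polynomials are genuinely bivariate; the paper instead handles them with the general multivariate degree bound (Lemma \ref{5552}/\ref{5551}) and reserves the univariate machinery for $\pi_{t,\mathcal{F}}$, its gcd with $X_t^{q^n}-X_t$, and the monomial reductions in step (1) above. And the proof is actually carried out for the alternative descent model $\overline{\mathcal{F}}_f$ with twisted field equations $X_{ij}^q-X_{i\,j+1}$, then transported to $\mathcal{F}'_f$ by a normal-basis change of variables (Proposition \ref{yang}); your normal-basis remark points in the right direction, but that comparison is a nontrivial step that must be carried out, not assumed.
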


When $m=1$, we obtain a slightly stronger version (Theorem \ref{8056}).

In Section \ref{230} we will explain why Theorem \ref{1234} is not useful to determine the complexity of solving systems coming from summation polynomials for the elliptic curve discrete logarithm problem.

Parts of the results in this article can be found in our paper \cite{HUA}, which will be presented at Crypto 2015. In that paper however, we only restrict to the case when $m=1$ and we leave out certain mathematical proofs.

\subsection{Organization of the paper}
In Section \ref{301} we discuss the last fall degree. We will also discuss how one can solve zero-dimensional systems using the last fall degree and we will compare this method with other methods. We also compare our methods with existing methods.
In Section \ref{302} we introduce Weil descent and an alternative version of Weil descent.
Section \ref{303} is devoted to the proof of Theorem \ref{1234}. In this section we first discuss the relation between the two Weil descent systems. Then we study the monovariate case and deduce the result for the multivariate case from the monovariate case using projection polynomials. Finally, we discuss how one can generalize the main theorem. 
In Section \ref{304} we discuss the relation with multi-HFE. In Section \ref{230} we discuss why the results in this article are not directly useful for studying systems coming from summation polynomials for the elliptic curve discrete logarithm problem.

\section{Last fall degree} \label{301}

In this section we introduce the notion of the \emph{last fall degree} of a system of polynomials. This notion is a parameter for the complexity of solving the polynomial system, and is independent of any monomial order. Later, we will use this notion to study the complexity of Weil descent systems.

Let $k$ be a field and let $R=k[X_0,\ldots,X_{m-1}]$ be a polynomial ring. Note that the affine group $\mathrm{Aff}_m(k) = k^m \rtimes \mathrm{GL}_m(k)$ acts on $R$ by affine change of variables. This action preserves the total degree. The set of polynomials of degree $\leq i$ is denoted by $R_{\leq i}$.

Let $\mathcal{F}$ be a finite subset of $R$ and let $I \subseteq R$ be the ideal generated by $\mathcal{F}$. We set $\deg(\mathcal{F})=\max \{\deg(f): f \in \mathcal{F}\}$. Furthermore, we set $\deg_{X_i}(\mathcal{F})=\max\{ \deg_{X_i}(f): f \in \mathcal{F}\}$. 

\subsection{Constructible polynomials}

\begin{definition}
For $i \in \Z_{\geq 0}$, we let $V_{\mathcal{F},i}$ be the smallest $k$-vector space such that
\begin{enumerate}
\item
$\mathcal{F} \cap R_{\leq i}=\{f \in \mathcal{F}: \deg(f) \leq i \} \subseteq V_{\mathcal{F},i}$;
\item
if $g \in V_{\mathcal{F},i}$ and if $h \in R$ with $\deg(hg) \leq i$, then $hg \in V_{\mathcal{F},i}$.
\end{enumerate}
We set $V_{\mathcal{F}, \infty}=I$. For convenience, we set $V_{\mathcal{F},-1}=\emptyset$. 
\end{definition}

If $\mathcal{F}$ is fixed, we just write $V_i$ instead of $V_{\mathcal{F},i}$.
Intuitively, $V_i$ is the largest subset of $I$ which can be constructed from $\mathcal{F}$ by doing operations of degree at most $i$.
Note that $V_i$ is a finite-dimensional $k$-vector space of dimension 
\begin{eqnarray*}
\mathrm{dim}_{k}(V_i) \leq \dim_k R_{\leq i}={{m+i}\choose{i}} \leq (m+i)^{i}.
\end{eqnarray*}
Notice that for any $f \in I$, there is an $i \in \Z_{\geq 0}$ such that $f \in V_i$. Phrased differently, we have $I=V_{\infty} = \bigcup_{i \in \Z_{\geq 0}} V_i$.

\begin{definition}
For $g,h \in R$ and $i \in \Z_{\geq 0} \sqcup \{\infty\}$. we write $g \equiv_{\mathcal{F},i} h$ if $g-h \in V_{\mathcal{F},i}$. If $\mathcal{F}$ is fixed, we often write $g \equiv_i h$. We write $g \equiv h$ if $g \equiv_{\infty}h$, which means $g-h \in I$.
\end{definition}

\begin{proposition} \label{4213}
Let $\mathcal{F}, \mathcal{G} \subset R$ be finite subsets, $i \in \Z_{\geq 0}$, $A \in \mathrm{Aff}_m(k)$ and $k'/k$ a field extension. Then the following hold:
\begin{enumerate}
\item $V_{\mathcal{F},i}$ can be constructed in a number of field operations which is polynomial in $(m+i)^i$ and in the cardinality of $\mathcal{F}$.
\item if $\mathcal{F} \subseteq \mathcal{G}$, then $V_{\mathcal{F},i} \subseteq V_{\mathcal{G},i}$;
\item if $\mathrm{Span}_k(\mathcal{F}) = \mathrm{Span}_k(\mathcal{G})$ and $i \geq \deg(\mathcal{F})$, then $V_{\mathcal{F},i}=V_{\mathcal{G},i}$;
\item one has $A V_{\mathcal{F},i}= V_{A \mathcal{F},i}$;
\item one has $V_{\mathcal{F},i} \otimes_k k' = V_{\{f \otimes_k 1:\ f \in \mathcal{F} \},i} \subset k'[X_0,\ldots,X_{m-1}]$.
\end{enumerate}
\end{proposition}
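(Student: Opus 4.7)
My plan is to base the entire proposition on a single explicit, inductive description of $V_{\mathcal{F},i}$. I would define a nondecreasing chain of finite-dimensional $k$-subspaces of $R_{\leq i}$ by
\begin{eqnarray*}
W_0 = \mathrm{Span}_k(\mathcal{F} \cap R_{\leq i}), \qquad W_{j+1} = W_j + \sum_{\ell=0}^{m-1} (X_\ell \cdot W_j) \cap R_{\leq i}.
\end{eqnarray*}
Since the chain is bounded in dimension by $\dim_k R_{\leq i} = \binom{m+i}{i} \leq (m+i)^i$, it stabilizes at some $W_\infty$. I claim $V_{\mathcal{F},i} = W_\infty$: an easy induction shows that any $k$-space satisfying the two defining conditions contains every $W_j$, so $V_{\mathcal{F},i} \supseteq W_\infty$; conversely, to verify the closure axiom for $W_\infty$, I would expand a general $h$ as a $k$-linear combination of monomials, reduce to $h = X_{\ell_1} \cdots X_{\ell_d}$, and apply single-variable closure $d$ times, noting that all intermediate products lie in $R_{\leq i}$, whence $W_\infty$ qualifies as a candidate and $V_{\mathcal{F},i} \subseteq W_\infty$ by minimality.

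Part (i) then falls out immediately: the iteration terminates within $(m+i)^i$ steps, and each step is linear algebra inside the ambient space $R_{\leq i}$ at cost polynomial in $(m+i)^i$ and $|\mathcal{F}|$. Part (ii) follows because $V_{\mathcal{G},i}$ is itself a $k$-space containing $\mathcal{F} \cap R_{\leq i}$ and closed under the multiplicative operation, hence contains $V_{\mathcal{F},i}$ by the minimality defining $V_{\mathcal{F},i}$.

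For (iii), the hypothesis $i \geq \deg(\mathcal{F})$ forces $\mathcal{F} \subseteq R_{\leq i}$, and the spanning hypothesis then gives $\mathcal{G} \subseteq \mathrm{Span}_k(\mathcal{F}) \subseteq R_{\leq i}$ as well; each side contains the common span $\mathrm{Span}_k(\mathcal{F}) = \mathrm{Span}_k(\mathcal{G})$ and is minimal subject to the axioms, so both inclusions follow as in (ii). For (iv), the affine action $A$ preserves total degree and hence $R_{\leq i}$; thus $A V_{\mathcal{F},i}$ contains $(A \mathcal{F}) \cap R_{\leq i}$, and the identity $h \cdot A g = A\bigl((A^{-1} h) \cdot g\bigr)$ combined with degree-preservation shows it is closed under the multiplicative operation for $A \mathcal{F}$, giving $V_{A\mathcal{F},i} \subseteq A V_{\mathcal{F},i}$; applying this to $A^{-1}$ gives the reverse inclusion. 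For (v), every step of the construction of $W_j$ is linear algebra plus multiplication by variables, both of which commute with the flat base change $- \otimes_k k'$ (in particular, flatness preserves the intersections $(X_\ell W_j) \cap R_{\leq i}$), so the chain tensored up coincides term by term with the chain for $\{f \otimes_k 1 : f \in \mathcal{F}\}$ over $k'$, and the two limits agree.

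The one step requiring real care, and the only genuine obstacle, is the passage from single-variable closure to general multiplicative closure in the identification $V_{\mathcal{F},i} = W_\infty$; everything else then reduces to routine bookkeeping with the universal property of $V_{\mathcal{F},i}$ as the smallest $k$-space satisfying the two listed axioms.
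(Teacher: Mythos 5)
Your proof is correct and follows essentially the same route as the paper's (which merely sketches it): part (i) by iterated linear algebra inside $R_{\leq i}$, and parts (ii)--(v) by the minimality/universal property of $V_{\mathcal{F},i}$ together with the fact that affine substitutions and base change preserve degrees. The one point you rightly single out --- reducing the closure axiom to multiplication by single variables, which works because $\deg(hg)=\deg(h)+\deg(g)$ keeps every intermediate product in $R_{\leq i}$ --- is left implicit in the paper but is handled correctly in your argument.
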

\begin{proof}
i: One can construct the $V_{\mathcal{F},i}$ using linear algebra as follows. Use a degree preserving ordered basis of $R_{\leq i}$ and use row echelon forms to construct the $V_{\mathcal{F},i}$.

ii, iii,v: Follows directly from the definitions.
 
iv: Follows because the action of $\mathrm{Aff}_m(k)$ respects degrees.
\end{proof}

\begin{remark} \label{yy}
Let $f_1, f_2, g_1, g_2 \in R$. Assume $f_1 \equiv_i f_2$, $g_1 \equiv_j g_2$. Assume that $\deg(f_1) \leq i$ and $\deg(g_2) \leq j$. Then one has
\begin{eqnarray*}
f_1g_1-f_2g_2 = f_1(g_1-g_2)+g_2(f_1-f_2) \in V_{i+j}.
\end{eqnarray*}
Hence we have $f_1 g_1 \equiv_{i+j} f_2 g_2$.
\end{remark}

\subsection{Last fall degree}

We now define the last fall degree.

\begin{definition}
Let $\mathcal{F}$ be a finite subset of $R$ and let $I$ be the ideal generated by $\mathcal{F}$.
The minimal $d \in \Z_{\geq 0} \sqcup \{\infty\}$ such that for all $f \in I$ we have $f \in V_{\max(d, \deg(f))}$, is called the \emph{last fall degree} of $\mathcal{F}$, and is denoted by $d_{\mathcal{F}}$.
\end{definition}

Note that the above definition implies that for $i \geq d_{\mathcal{F}}$, one has $V_{\mathcal{F},i}=I \cap R_{\leq i}$.

We will now state some of the properties of the last fall degree.

\begin{proposition} \label{4444}
Let $\mathcal{F}, \mathcal{G} \subset R$ be finite subsets which generate ideals $I$ respectively $J$. Let $A \in \mathrm{Aff}_m(k)$ and $k'/k$ be a field extension. The following hold.
\begin{enumerate}
\item One has: $d_{\mathcal{F}} \in \Z_{\geq 0}$.
\item Let $\mathcal{B}$ be a Gr\"obner basis with respect to some degree refining monomial order on $R$. Then there is an integer $c \in \Z_{\geq 0}$ such that $\mathcal{B} \subseteq V_{\mathcal{F},c}$ and one has $d_{\mathcal{F}} \leq c$. 
\item One has: $d_{\mathcal{F}}$ is the largest $c \in \Z_{\geq 0}$ such that $V_c \cap R_{\leq c-1} \neq V_{c-1}$.
\item If $\mathrm{Span}_k(\mathcal{F}) = \mathrm{Span}_k(\mathcal{G})$, then one has $\max(d_{\mathcal{F}}, \deg(\mathcal{F}))= \max(d_{\mathcal{G}}, \deg(\mathcal{F}))$.
\item One has: $d_{\mathcal{F}}=d_{A\mathcal{F}}$.
\item Consider the set $\{f \otimes 1:\ f \in \mathcal{F}\} \subset k'[X_0,\ldots,X_{m-1}]$. One has: $d_{\{f \otimes 1:\ f \in \mathcal{F}\}} = d_{\mathcal{F}}$.
\item If $I=J$ and $\mathcal{F} \subseteq \mathcal{G}$, then one has $d_{\mathcal{G}} \leq d_{\mathcal{F}}$.
\item If $g \in V_{\mathcal{F},j}$, then one has $d_{\mathcal{F}} \leq \max(j, d_{\mathcal{F} \cup \{g\}})$.
\end{enumerate}
\end{proposition}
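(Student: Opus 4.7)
My plan is to dispatch the items in the order (ii)$\Rightarrow$(i); then (iii); then (iv)--(vii) as immediate corollaries; and finally (viii), which is the only substantive item and will require an auxiliary lemma.

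For (ii), I would take a Gr\"obner basis $\mathcal{B}$ of $I$ for some degree-refining monomial order. Since $\mathcal{B}$ is finite and $I = \bigcup_i V_{\mathcal{F},i}$, all of $\mathcal{B}$ lies in $V_{\mathcal{F},c}$ for some large $c$. Then for any $f \in I$ the standard division against $\mathcal{B}$ in a degree-refining order yields a representation $f = \sum h_j b_j$ with $\deg(h_j b_j) \leq \deg(f)$; the closure property of $V_{\mathcal{F}, \max(c, \deg(f))}$ then places $f$ inside it, proving $d_{\mathcal{F}} \leq c$. This gives (ii), and (i) is immediate.

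For (iii), the inclusion $V_{c-1} \subseteq V_c \cap R_{\leq c-1}$ always holds, by the monotonicity $V_{c-1} \subseteq V_c$; call $c$ a fall level when the inclusion is strict. Once (i) is in hand the set of fall levels is bounded, and a straightforward descent argument shows that above the largest fall level $c^*$ any element of $V_j$ of degree less than $j$ drops to $V_{j-1}$, which gives $d_{\mathcal{F}} \leq c^*$; conversely a strict fall at $c^*$ exhibits an element of $I \cap R_{\leq c^{*}-1}$ that prevents $d_{\mathcal{F}} < c^*$. Items (iv), (v), (vi) then follow from Proposition~\ref{4213}(iii), (iv), (v), each of which shows that the filtration $(V_{i})$ is preserved under the relevant operation (in (iv), only above $\deg(\mathcal{F}) = \deg(\mathcal{G})$, which is why the statement takes the $\max$), so by (iii) the largest fall level is preserved. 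For (vii), Proposition~\ref{4213}(ii) gives $V_{\mathcal{F},i} \subseteq V_{\mathcal{G},i}$ for every $i$, and plugging $f \in I = J$ into the defining property of $d_{\mathcal{F}}$ directly yields $d_{\mathcal{G}} \leq d_{\mathcal{F}}$.

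The main obstacle is (viii). The heart of the argument is an auxiliary lemma: $V_{\mathcal{F} \cup \{g\}, i} = V_{\mathcal{F}, i}$ for every $i \geq j$. The inclusion $\supseteq$ is Proposition~\ref{4213}(ii). For the reverse I would use the minimality definition of $V_{\mathcal{F} \cup \{g\}, i}$: one must check that $V_{\mathcal{F}, i}$ contains $(\mathcal{F} \cup \{g\}) \cap R_{\leq i}$, which reduces to showing $g \in V_{\mathcal{F}, i}$, and this in turn uses the monotonicity $V_{\mathcal{F}, j} \subseteq V_{\mathcal{F}, i}$ (itself a quick consequence of minimality applied to the two defining conditions). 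The closure condition is immediate. Granted the lemma, set $d' = d_{\mathcal{F} \cup \{g\}}$; since $g \in I$ both sets generate the same ideal $I$, and for any $f \in I$ the definition gives $f \in V_{\mathcal{F} \cup \{g\}, \max(d', \deg(f))}$. Bumping the index up to $\max(j, d', \deg(f)) \geq j$ and invoking the lemma converts this to $f \in V_{\mathcal{F}, \max(j, d', \deg(f))}$, which is exactly the required bound $d_{\mathcal{F}} \leq \max(j, d')$.
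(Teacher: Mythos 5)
Your proposal is correct and follows essentially the same route as the paper: (ii) via division against a Gröbner basis for a degree-refining order with the degree bound $\deg(h_jb_j)\le\deg(f)$, (i) as a corollary, (iii) by matching the last fall level with the definition, (iv)--(vii) from Proposition~\ref{4213}, and (viii) via the identity $V_{\mathcal{F},i}=V_{\mathcal{F}\cup\{g\},i}$ for $i\ge j$, which is exactly the paper's one-line justification. You simply supply more of the routine details that the paper leaves implicit.
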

\begin{proof}
i, ii: i follows from ii directly, since a Gr\"obner basis always exists. It is easy to see that there is a $c$ with $\mathcal{B} \subseteq V_{\mathcal{F},c}$. Take $f \in I$ and write $f=\sum_{b \in \mathcal{B}} a_b b$ with $\deg(a_b b) \leq \deg(f)$ for $b \in \mathcal{B}$. This is possible because $\mathcal{B}$ is a Gr\"obner basis. Then one easily finds $f \in V_{\max(\deg(f),c)}$.

iii: Let $c$ be as in the property. By definition we have $d_{\mathcal{F}} \geq c$ and furthermore we have
\begin{eqnarray*}
V_{d_{\mathcal{F}}} \cap R_{\leq d_{\mathcal{F}}-1} =I \cap R_{\leq d_{\mathcal{F}}-1} \neq V_{d_{\mathcal{F}}-1}.
\end{eqnarray*}

iv: Follows directly from the definitions (Proposition \ref{4213}iii).

v: Follows from Proposition \ref{4213}iv.

vi, vii: Follows directly from the definitions.

viii: Follows since $V_{\mathcal{F},i}=V_{\mathcal{F} \cup \{g\},i}$ if $i \geq j$.

\end{proof}

Note that property iv gives a nice interpretation of the last fall degree: it is the largest degree fall we need to completely get the ideal, hence the name (another name might be \emph{maximal gap degree}, which is more in the spirit of the definition itself). In the next section, we show how one can solve a system once one knows the last fall degree. In heuristics, one often uses the notion of \emph{first fall degree}, the first $c$ such that $V_c \cap R_{\leq c-1} \neq V_{c-1}$ to bound the complexity of Gr\"obner basis algorithms. Actually, most articles, such as \cite{PET}, use a slightly different definition of the first fall degree. They say that the first fall degree $d_{\mathcal{F},f}$ is the first $d \geq \deg(\mathcal{F})$ such that there exists $g_f \in R$ for $f \in \mathcal{F}$ such that $d=\max_{f \in \mathcal{F}}(\deg(g_f f))$ and $\deg( \sum_{f \in \mathcal{F}} g_f f )<d$ and $\sum_{f \in \mathcal{F}} g_f f \neq 0$. By definition we have $d_{\mathcal{F},f} \leq d_{\mathcal{F}}$ if $d_{\mathcal{F}} \geq \deg(\mathcal{F})$ and $d_{\mathcal{F}}>0$. We do not think that the first fall degree is the right notion for the complexity of such algorithms (see also \cite{KO14}). We will derive complexity bounds for solving systems based on the last fall degree.

Property ii in combination with iii gives a method (using a monomial order and a Gr\"obner basis computation) to compute the last fall degree. It would be of great importance to find a method which does not use a monomial order.

\begin{remark}
Let $\mathcal{F}$ be  a finite subset of $R$. It is in general not true that $V_{\mathcal{F},d_{\mathcal{F}}}$ generates the same ideal as $\mathcal{F}$. For example, if $m=1$ and $\mathcal{F}=\{f\}$ with $f$ not constant, then one has $d_{\mathcal{F}}=0$, whereas $V_{\mathcal{F},0}$ does not generate $(f)$.  
\end{remark}

\subsection{Solving systems}

We will now discuss how one can solve a multivariate zero-dimensional system once the last fall degree is known.

\begin{proposition} \label{666}
Let $k$ be a field. Assume that one can factor a polynomial of degree at most $t$ using a number of field equations which is polynomial in $g(t)$ where $g$ is some function. 
Let  $\mathcal{F} \subset R$ be a finite set.  Assume that the ideal $I$ generated by $\mathcal{F}$ is radical and that the system has at most $e$ solutions over $\overline{k}$. Set $d=\max(d_{\mathcal{F}},e)$. Then one can find all solutions of $I$ in $k$ in a number of field operations which is polynomial in the cardinality of $\mathcal{F}$, $g(d)$ and $(m+d)^d$.
\end{proposition}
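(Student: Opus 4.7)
The plan is to exploit $d \geq d_{\mathcal F}$, which by definition forces $V_{\mathcal F,d}=I \cap R_{\leq d}$, so that every low-degree element of $I$ is accessible by linear algebra, and then reduce finding $k$-rational zeros of $\mathcal F$ to univariate factoring through a Kronecker-style ``shape lemma'' decomposition of $R/I$.

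First I would build $V:=V_{\mathcal F,d}$ by the linear-algebra procedure of Proposition \ref{4213}(i); this costs $\mathrm{poly}((m+d)^d,|\mathcal F|)$ field operations and produces every element of $I$ of degree $\leq d$. Because $I$ is radical and zero-dimensional with $|Z(\mathcal F)|\leq e$, the evaluation map $R/I \hookrightarrow \prod_{P \in Z(\mathcal F)}\bar k$ is injective, and the minimal polynomial $p_i \in k[X_i]$ of $X_i$ modulo $I$ equals $\prod_\alpha (X_i-\alpha)$ over the distinct $i$-th coordinates $\alpha$ of points of $Z(\mathcal F)$, so $\deg p_i \leq e \leq d$. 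Hence $p_i \in V \cap k[X_i]$, and each $p_i$ is extracted as the minimum-degree nonzero element of this finite-dimensional $k$-vector space by linear algebra inside $V$.

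For the assembly step I would seek a linear form $\theta=c_0X_0+\cdots+c_{m-1}X_{m-1}$ with $c_i \in k$ that takes distinct values on $Z(\mathcal F)\subset \bar k^m$. For such a primitive $\theta$ one has $R/I \cong k[T]/(p_\theta)$, where $p_\theta$ is the minimal polynomial of $\theta$ modulo $I$, of degree $|Z(\mathcal F)|\leq e \leq d$, and each $X_i$ is congruent modulo $I$ to a unique polynomial $q_i(\theta)$ with $q_i \in k[T]$ of degree $<\deg p_\theta$. The polynomial $p_\theta(\theta)$ and the shape relations $X_i-q_i(\theta)$ all have total degree at most $e\leq d$, so they lie in $V$ and are recoverable from $V$ by linear algebra. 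Factoring $p_\theta$ over $k$ in $\mathrm{poly}(g(d))$ operations, and substituting each $k$-rational root $\beta$, produces at most $e$ candidate tuples $(q_0(\beta),\ldots,q_{m-1}(\beta)) \in k^m$, each verifiable against $\mathcal F$ by direct substitution. This avoids the $e^m$ blowup of enumerating the full product of root sets of the $p_i$.

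The hard part is producing a primitive $\theta$ inside $k^m$ within the claimed complexity bound, since when $|k|$ is small relative to $e^2m$ no such $\theta$ need exist. I would handle this with (i) a Kronecker family $\theta_j=\sum_i j^iX_i$ for $j\in k$, where at most $\binom{|Z(\mathcal F)|}{2}(m-1)$ values of $j$ are ``bad'' by the usual separation argument, so a primitive $\theta_j$ is found after $O(e^2m)$ trials once $|k|$ is large enough; or, when $|k|$ is too small, (ii) base-change to a short extension $k'/k$, using Proposition \ref{4444}(vi) to preserve $d_{\mathcal F}$ and $V_{\mathcal F,d}$, run the procedure over $k'$, and filter the resulting $k'$-rational candidates for those actually lying in $k^m$.
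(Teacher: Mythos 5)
Your argument is correct in substance, but it takes a genuinely different route from the paper's. The paper never introduces a primitive element: it finds the projection polynomial $h_0=\prod_a(X_0-a)$ as the minimal-degree nonzero element of $V_d\cap\mathrm{Span}_k\{1,X_0,\ldots,X_0^e\}$, factors it, and for each $k$-rational root $a_0$ recurses on the fibre above $a_0$ by locating $h_1=h_0'\cdot\prod_a(X_1-a)$ in $V_d\cap\mathrm{Span}_k\{h_0',X_1h_0',\ldots,X_1^{e-d_0+1}h_0'\}$, where $h_0'=h_0/(X_0-a_0)$; the prefix factor $h_0'$ kills the points off the fibre (so $h_1\in I$ by radicality) while the shrinking fibre keeps $\deg h_1\leq e\leq d$, and the recursion tree has at most $e$ nodes per level, so the $e^m$ blow-up is avoided with no separating form ever needed. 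Your shape-lemma route instead pulls $p_\theta(\theta)$ and the relations $X_i-q_i(\theta)$ out of $V_d$ (all of total degree at most $e\leq d$, hence indeed in $V_d=I\cap R_{\leq d}$), which buys a single univariate factorization and an explicit parametrization of all solutions, at the price of manufacturing a separating $\theta$. Two small points there: (a) you should say how a candidate $\theta_j$ is certified --- solvability in $V_d$ of the affine systems $X_i-q_i(\theta_j)\in V_d$ for all $i$ suffices, and since any relations so found genuinely lie in $I$, the final substitute-and-verify step keeps the output correct even without certifying separation; (b) your fallback (ii) requires factoring over the extension $k'$, which the black-box hypothesis grants only over $k$ --- harmless for finite fields, which is both the intended application and the only case where the fallback is needed, but worth flagging since the proposition is stated for arbitrary $k$; the paper's fibre-by-fibre recursion sidesteps both issues. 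Both proofs implicitly use perfectness of $k$ (Galois descent of $\prod_a(X_0-a)$, respectively of $p_\theta$ and the $q_i$, to $k$), so you are no worse off on that front.
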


\begin{proof}
Compute $V_d$ with a number of field operations polynomial in the input size of $\mathcal{F}$ and $(m+d)^d$ (Proposition \ref{4213}i). We will work in $V_d$ to find all the solutions.

Assume that all solutions over $\overline{k}$ of the system are
\begin{eqnarray*}
Z(\mathcal{F}) = \{(a_{0,0},\ldots,a_{0,m-1}), \ldots, (a_{t,0},\ldots,a_{t,m-1}) \} \subset \overline{k}^m
\end{eqnarray*}
with $t < e$.
Since $I$ is a radical ideal, by the Nullstellensatz and Galois theory, one has
\begin{eqnarray*}
h_0 = \prod_{a \in \{a_{i,0}: i=0,\ldots,t\}} (X_0-a) \in I.
\end{eqnarray*}
Using linear algebra, and the definition of the last fall degree, one can find $h_0$ as the nonzero polynomial of minimal degree $d_0$ in $V_d \cap \mathrm{Span}_{k}\{1,X_0,\ldots,X_0^e\}$.
Factor $h_0$ with a number of operations polynomial in $g(t)$. Assume that $a_0$ is a root of $h_0$ in $k$. We will find all solutions over $k$ with $X_0=a_0$. Set $h_0'=h_0/(X_0-a_0)$ of degree $d_0-1$.  By the Nullstellensatz and Galois theory, one has
\begin{eqnarray*}
h_1= h_0' \prod_{a \in \{a_{i,1}: i=0,\ldots,t, a_{i,0}=a_0 \}}  (X_1-a) \in I.
\end{eqnarray*}
Using linear algebra, one finds $h_1$ as the polynomial of minimal degree $d_1$ in $V_d \cap \mathrm{Span}_{k}\{h_0',X_1 h_0',\ldots,X_1^{e-d_0+1} h_0' \}$.
Factor $h_1/h_0'$ over $k$. Pick a solution $a_1$ over $k$ and find all solutions with $X_0=a_0$, $X_1=a_1$ using the similar recursive procedure. Hence one can find all solutions over $k$ with the claimed number of field operations.
\end{proof}

If $k$ is a finite field of cardinality $q$, one can factor a polynomial of degree bounded by $t$ with operations polynomial in  $\max(\log(q),t)$ in a probabilistic way and $\max(q,t)$ in a deterministic way \cite{GAT}.

\subsection{Comparison} \label{906}

In this subsection we will compare the above approach of solving a system $\mathcal{F}$ with other methods. 

The construction of the $V_i$ above is quite similar to operations done using algorithms like XL (see for example \cite{COU}), although we `use' relations which cause the degree to fall (see for example MutantXL, \cite{BUC}). Our method for solving the system itself (Proposition \ref{666}) is more in the spirit of using a lexicographic order to solve the system. 

Given a system $\mathcal{F}$, in practice, one often does not know $d_{\mathcal{F}}$. One can then solve the system by increasing $i$ and computing the $V_i$ until one has the right projection polynomials as in the proof of Proposition \ref{666}. This is the main idea of MutantXL (see \cite{BUC}).

From a complexity point of view, the last fall degree also shows that under certain circumstances MutantXL (or the above described method) is faster than the standard Gr\"obner basis methods. Indeed, suppose that the system $\mathcal{F}$ has $s \leq d_{\mathcal{F}}$ solutions. Then one can solve the system by looking at $V_{d_{\mathcal{F}}}$ (Proposition \ref{666}). Note that $d_{\mathcal{F}}$ is not more than the degree needed to compute a Gr\"obner basis for any monomial order (Proposition \ref{4444}ii). Hence the new algorithm might terminate at a lower degree than a Gr\"obner basis algorithm. If this happens, this usually means that the MutantXL approach is faster.

From a practical point of view, we did not really address how to construct the $V_i$ as efficiently as possible. To construct these $V_i$ in an efficient way, one has to try to keep matrices as sparse as possible and do as few as possible reductions. Algorithms such as $F_4$, $F_5$ \cite{FAU4, FAU5} or MutantXL \cite{BUC} should help to achieve this.

We hope that the framework with the last fall degree allows one to prove complexity statements of solving certain systems. Our framework has the advantage that it behaves well with respect to various operations (Proposition \ref{4444}) and that we do not use a monomial order.
For example, it allows us to compare the last fall degree of a system before and after Weil descent, without using heuristic assumptions (Theorem \ref{1234}).

\section{Weil descent} \label{302}

Let $q$ be a prime power. Let $n \in \mathbb{Z}_{\geq 1}$ and let $k$ be a finite field of cardinality $q^n$. Let $k'$ be the subfield of $k$ of cardinality $q$. In this section, we introduce two Weil descent transforms for a finite subset of $R=k[X_0,\ldots,X_{m-1}]$.

Let $\mathcal{F} \subset R$ be a finite set of polynomials. Suppose we want to find the common zeros of these polynomials in $k$.  Let $I$ be the ideal generated by 
\begin{eqnarray*}
\mathcal{F}_f=\mathcal{F} \cup \{ X_i^{q^n}-X_i: i=0,\ldots,m-1\}.
\end{eqnarray*}

\subsection{Weil descent} \label{551}

Let $\alpha_0,\ldots,\alpha_{n-1}$ be a basis of $k/k'$. Write $X_i= \sum_{j=0}^{n-1} \alpha_j X_{ij}$. For $f \in \mathcal{F}$ and $j=0,\ldots,n-1$, we define $[f]_{j} \in k'[X_{ij}, i=0,\ldots,m-1, j=0,\ldots,n-1]$ by
\begin{eqnarray*}
f(\sum_{j=0}^{n-1} \alpha_j X_{0j}, \ldots, \sum_{j=0}^{n-1} \alpha_j X_{m-1\ j}) \equiv \sum_{j=0}^{n-1} [f]_{j} \alpha_j && \pmod{X_{ij}^q-X_{ij},\ i=0,\ldots, \\
&&\ \ \ \  \ m-1, j=0,\ldots,n-1}
\end{eqnarray*}
where $[f]_j$ is chosen of minimal degree (so $\deg_{X_{ij}}([f]_k) \leq q-1$).
The system 
\begin{eqnarray*}
\mathcal{F}'=\{[f]_j:\ f \in \mathcal{F}, j=0,\ldots,n-1\}
\end{eqnarray*}
is called the \emph{Weil descent system} of $\mathcal{F}$ with respect to $\alpha_0,\ldots,\alpha_{n-1}$. 
There is a bijection between the solutions over $k$ (or $\overline{k}$) of $\mathcal{F}_f$ and the solutions over $k'$ (or $\overline{k}$) of 
\begin{eqnarray*}
\mathcal{F}'_f=\mathcal{F}' \cup \{ X_{ij}^q-X_{ij}: i=0,\ldots,m-1,\ j=0,\ldots,n-1\}. 
\end{eqnarray*}
Note that the ideals generated by $\mathcal{F}_f$ and $\mathcal{F}'_f$ are radical ideals.

An interesting choice for the $\alpha_i$ is a normal basis, that is, a basis with $\alpha_i=\theta^{q^i}$ for some $\theta \in k$. Such a basis always exists.

\begin{remark} \label{812}
A different choice of $\alpha_i$ merely results in a linear change of the variables $X_{ij}$ and a linear change of the polynomials $[f]_i$ and the field equations $X_{ij}^q-X_{ij}$.
Indeed, if $\beta_0,\ldots,\beta_{n-1}$ is another basis, then we can write $\beta_i=\sum_{j=0}^{n-1} c_{ij} \alpha_j$ and $\alpha_i = \sum_{j=0}^{n-1} d_{ij} \beta_j$. Let $C=(c_{ij})_{i,j}$ be the corresponding matrix. One has:
\begin{eqnarray*}
f(\sum_{j=0}^{n-1} \beta_j X_{0j}, \ldots, \sum_{j=0}^{n-1} \beta_j X_{m-1\ j}) &=&  f( \sum_{k=0}^{n-1}  \alpha_k  \sum_{j=0}^{n-1} c_{jk} X_{0j}, \ldots, \sum_{k=0}^{n-1}  \alpha_k  \sum_{j=0}^{n-1} c_{jk} X_{m-1\ j}  ) \\
&\equiv& \sum_{i=0}^{n-1}\mathrm{diag}(C,\ldots,C)  [f]_i \alpha_i \\
&=&\sum_{j=0}^{n-1} \left(   \sum_{i=0}^{n-1}  d_{ij} \mathrm{diag}(C,\ldots,C)  [f]_i \right) \beta_j. 
\end{eqnarray*}
If $d$ is the last fall degree of $\mathcal{F}_f'$ with respect to the $\alpha_i$, and $d'$ with respect to the $\beta_i$, we conclude that $\deg(\mathcal{F}')$ does not depend on the choice of basis and that
\begin{eqnarray*}
\max(d,\deg(\mathcal{F}'),q)=\max(d',\deg(\mathcal{F}'),q).
\end{eqnarray*}
\end{remark} 

\subsection{Another model for Weil descent}

For practical reasons, we will often work with another model of Weil descent.

Let $S=k[X_{ij}: i=0,\ldots,m-1,\ j=0,\ldots, n-1]$. Let $e_0,\ldots,e_{m-1} \in \Z_{\geq 0}$. Let $X_{i}^{e_i'}$ be the remainder of division of $X_i^{e_i}$ by $X_i^{q^n}-X_i$. Write $e_i'= \sum_{j=0}^{n-1} e_{ij}'q^j$ in base $q$ with $e_{ij}' \in \{0,1,\ldots,q-1\}$. 
We set
\begin{eqnarray*}
\overline{\prod_{i=0}^{m-1} X_i^{e_i}} = \prod_{i=0}^{m-1} X_{i0}^{e'_{i0}} \cdots X_{i\ n-1}^{e_{i\ n-1}'} \in S.
\end{eqnarray*}
We extend this definition $k$-linearly for all polynomials in $R$. This gives a map $\bar{}: R \to S$.
We set
\begin{eqnarray*}
\overline{\mathcal{F}}=\{ \overline{f}: f \in \mathcal{F} \}
\end{eqnarray*}
and we set, where by convention $X_{in}=X_{i0}$,
\begin{eqnarray*}
\overline{\mathcal{F}}_f = \overline{\mathcal{F}} \cup \{X_{ij}^q-X_{i\ j+1}:\ i=0,\ldots,m-1,\ j=0,\ldots,n-1 \}.
\end{eqnarray*}
We let $\overline{I}$ be the ideal generated by $\overline{\mathcal{F}}_f$. Note that $\overline{I}$ is radical.

There is a bijection between the zero set of $I$ (over $k$ or $\overline{k}$) and that of $\overline{I}$ (over $k$ or $\overline{k}$). If for example $X_i=a_i \in \overline{k}$ gives a zero of $I$, then $(X_{i0},\ldots,X_{i\ n-1})=(a_i,a_i^q,\ldots,a_i^{q^{n-1}})$ gives a zero of $\overline{I}$. 

We will now prove a couple of lemmas which will be useful later.

\begin{lemma} \label{544}
Let $h_1, h_2 \in R$, $g \in S$. One has, where $\equiv_i$ is defined with respect to $\overline{\mathcal{F}}_f$:
\begin{enumerate}
\item $ \overline{h_1+h_2} \equiv_{\max(\deg(\overline{h_1}),\deg(\overline{h_2}))} \overline{h_1}+\overline{h_2}$;
\item $\overline{h_1} \cdot \overline{h_2} \equiv_{\deg(\overline{h_1})+\deg(\overline{h_2})} \overline{h_1 h_2}$;
\item There is $h_3 \in R$ with $\deg_{X_i}(h_3)<q^n$ such that $g \equiv_{\deg(g)} \overline{h_3}$.
\end{enumerate}
\end{lemma}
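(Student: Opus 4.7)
I plan to prove the three parts in the order (i), (iii), (ii), since (ii) will rely on (iii). Part (i) is immediate: the bar map $R\to S$ is $k$-linear by construction, so $\overline{h_1+h_2}$ and $\overline{h_1}+\overline{h_2}$ are literally equal in $S$, and the zero difference lies in every $V_i$.

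For part (iii), I would first reduce by $k$-linearity to the case where $g$ is a single monomial $M=\prod X_{ij}^{a_{ij}}\in S$ of total degree $D=\deg(g)$, and then show by induction on $D$ that $M\equiv_D\overline{h_3}$ for some $h_3\in R$ with $\deg_{X_i}(h_3)<q^n$. If every $a_{ij}<q$, then $M=\overline{\prod_i X_i^{\sum_j a_{ij}q^j}}$ already has the required form. Otherwise some $a_{i_0 j_0}\geq q$, and I would factor $M=X_{i_0 j_0}^q\cdot M'$ with $\deg(M')=D-q$. Because $X_{i_0 j_0}^q-X_{i_0,j_0+1}\in\overline{\mathcal{F}}_f$ (with the cyclic convention $X_{i,n}=X_{i,0}$), one has $X_{i_0 j_0}^q\equiv_q X_{i_0,j_0+1}$, and Remark \ref{yy} (applied with $\deg(X_{i_0 j_0}^q)=q$ and $\deg(M')=D-q$) gives $M\equiv_D X_{i_0,j_0+1}\cdot M'$, a monomial of degree $D-(q-1)<D$ (using $q\geq 2$). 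The procedure terminates since $D$ strictly decreases at each step, and reassembling $k$-linearly over the monomials of $g$ gives the claim for general $g$.

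For part (ii), I would apply (iii) to $g=\overline{h_1}\cdot\overline{h_2}\in S$ (of degree $\deg(\overline{h_1})+\deg(\overline{h_2})$), obtaining an $h_3\in R$ with $\deg_{X_i}(h_3)<q^n$ and $\overline{h_1}\cdot\overline{h_2}\equiv_{\deg(\overline{h_1})+\deg(\overline{h_2})}\overline{h_3}$. It then suffices to verify $\overline{h_3}=\overline{h_1 h_2}$ in $S$. The reductions used in (iii) all lie in the subideal $J=(X_{ij}^q-X_{i,j+1}:i,j)\subset S$, so $\overline{h_3}\equiv\overline{h_1}\cdot\overline{h_2}\pmod J$; moreover the ring isomorphism $S/J\xrightarrow{\sim} R/(X_i^{q^n}-X_i)$ given by $X_{ij}\mapsto X_i^{q^j}$ (with inverse induced by $X_i\mapsto X_{i0}$) sends both $\overline{h_1}\cdot\overline{h_2}$ and $\overline{h_1 h_2}$ to the class of $h_1h_2$, so these two elements agree modulo $J$. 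Since the monomials with every $\deg_{X_{ij}}<q$ descend to a $k$-basis of $S/J$, canonical forms are unique, whence $\overline{h_3}=\overline{h_1 h_2}$.

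The main obstacle will be the bookkeeping at the end of (ii): producing the isomorphism $S/J\cong R/(X_i^{q^n}-X_i)$ and verifying that the monomials with all $\deg_{X_{ij}}<q$ form a $k$-basis of the quotient. Neither step is deep, but they are precisely what lets one upgrade a degree-bounded $J$-equivalence to the exact identity in $S$ asserted by the lemma.
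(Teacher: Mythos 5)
Your proof is correct and takes essentially the same route as the paper, whose entire proof reads ``one reduces to the case of monomials and the result then follows easily''; your argument supplies exactly the monomial reductions (base-$q$ carrying via the relations $X_{ij}^q-X_{i\, j+1}$, controlled in degree by Remark \ref{yy}) that the paper leaves implicit. Deriving (ii) from (iii) together with the isomorphism $S/J\cong R/(X_i^{q^n}-X_i)$ and the uniqueness of reduced monomial representatives is a clean way to make the ``follows easily'' step rigorous.
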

\begin{proof}
One reduces to the case of monomials and the result then follows easily.
\end{proof}

We have a morphism of $k$-algebras $\varphi: S \to R$ which maps $X_{ij}$ to $X_i^{q^j}$. This map has the following properties.

\begin{lemma} \label{943}
Let $h \in R$. The following statements hold:
\begin{enumerate}
\item $\varphi(\overline{h}) \equiv h \pmod{X_i^{q^n}-X_i,\ i=0,\ldots,m-1}$;
\item $h \in I$ if and only if $\overline{h} \in \overline{I}$.
\end{enumerate}
\end{lemma}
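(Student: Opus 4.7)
The plan is to prove the two parts in order, with part (i) reducing to a direct computation on monomials and part (ii) leveraging part (i) together with Lemma \ref{544}.

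For part (i), both $\overline{\phantom{h}}$ and $\varphi$ are $k$\dash linear, so it suffices to verify the congruence on a single monomial $\prod_i X_i^{e_i}$. Unwinding the definitions, $\overline{\prod_i X_i^{e_i}} = \prod_{i,j} X_{ij}^{e'_{ij}}$, where $(e'_{ij})_j$ is the base\dash $q$ expansion of $e_i'$ and $X_i^{e_i'}$ is the remainder of $X_i^{e_i}$ modulo $X_i^{q^n}-X_i$. Applying $\varphi$ gives $\prod_i X_i^{\sum_j q^j e'_{ij}} = \prod_i X_i^{e_i'}$, and $X_i^{e_i} - X_i^{e_i'} \in (X_i^{q^n}-X_i)$ by construction, so the congruence follows.

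For the forward direction of part (ii), write $h = \sum_{f \in \mathcal{F}} g_f f + \sum_i g_i(X_i^{q^n}-X_i)$ with $g_f, g_i \in R$. A direct computation shows $\overline{X_i^{q^n}-X_i} = 0$ (the exponent $q^n$ reduces to $1$, so both terms map to $X_{i0}$ and cancel). Lemma \ref{544}(ii) gives $\overline{g_f f} \equiv \overline{g_f}\,\overline{f} \pmod{\overline{I}}$ and $\overline{g_i(X_i^{q^n}-X_i)} \equiv \overline{g_i} \cdot 0 \pmod{\overline{I}}$, while $\overline{f} \in \overline{\mathcal{F}} \subseteq \overline{I}$. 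Since $\overline{\phantom{h}}$ is $k$\dash linear, summing shows $\overline{h} \in \overline{I}$.

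For the converse, assume $\overline{h} \in \overline{I}$ and write $\overline{h} = \sum_f G_f \overline{f} + \sum_{i,j} G_{ij}(X_{ij}^q - X_{i,j+1})$ with $G_f, G_{ij} \in S$. Apply $\varphi$: for each field\dash equation term, $\varphi(X_{ij}^q) = X_i^{q^{j+1}}$ while $\varphi(X_{i,j+1}) = X_i^{q^{j+1}}$ for $j < n-1$ and $\varphi(X_{in}) = \varphi(X_{i0}) = X_i$ for $j = n-1$, so the term is either $0$ or equal to $\varphi(G_{i,n-1})(X_i^{q^n}-X_i) \in I$. By part (i), $\varphi(\overline{f}) \equiv f \pmod{X_i^{q^n}-X_i, \ i=0,\ldots,m-1}$, hence $\varphi(\overline{f}) \in I$ for $f \in \mathcal{F}$, and thus $\varphi(\overline{h}) \in I$. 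Invoking part (i) once more for $h$ itself gives $h \equiv \varphi(\overline{h}) \pmod{X_i^{q^n}-X_i}$, so $h \in I$.

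There is no serious obstacle: the only subtlety is that $\overline{\phantom{h}}$ is not multiplicative, but Lemma \ref{544}(ii) absorbs the defect into $\overline{I}$, and one must pay attention to the cyclic convention $X_{in}=X_{i0}$ when applying $\varphi$ to the field equations $X_{i,n-1}^q - X_{i0}$ so that the image lands on the original field equation $X_i^{q^n}-X_i$.
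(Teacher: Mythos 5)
Your proof is correct and follows essentially the same route as the paper: part (i) by reduction to monomials, the forward direction of (ii) by applying the bar map to a representation of $h$ and using Lemma \ref{544}(ii) together with $\overline{X_i^{q^n}-X_i}=0$, and the converse by applying $\varphi$ and part (i). The only difference is that you make explicit the closing step ($h \equiv \varphi(\overline{h})$ modulo the field equations, which lie in $I$), which the paper leaves implicit.
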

\begin{proof}
i: Follows directly.

ii: Let $h \in I$. We will show $\overline{h} \in \overline{I}$. One can write $h= \sum_{i=0}^{m-1} b_i (X_i^{q^n}-X_i) + \sum_{f \in \mathcal{F}} a_f f$. Modulo $\overline{I}$ we find with Lemma \ref{544}:
\begin{eqnarray*}
\overline{h} =\overline{  \sum_{i=0}^{m-1} b_i (X_i^{q^n}-X_i) + \sum_{f \in \mathcal{F}} a_f f} \equiv \sum_{i=0}^{m-1} \overline{b_i} (X_{i0}-X_{i0})+\sum_{f \in \mathcal{F}}^r \overline{a_f} \overline{f} \equiv 0.
\end{eqnarray*}

Conversely, let $h \in R$ and assume $\overline{h} \in \overline{I}$. 
Write $\overline{h}=  \sum_{i=0}^{m-1} \sum_{j=0}^{n-1} c_{ij} (X_{i j}^q-X_{i\ j+1})+ \sum_{f \in \mathcal{F}} b_f\overline{f}$. One finds, using i,
\begin{eqnarray*}
\varphi(\overline{h}) &=&   \sum_{i=0}^{m-1} \sum_{j=0}^{n-1} \varphi(c_{ij}) \varphi(X_{ij}^q-X_{i\ j+1})+  \sum_{f \in \mathcal{F}} \varphi(b_f) \varphi(\overline{f}) \\
&\equiv& \sum_{i=0}^{m-1} \varphi(c_{i\ n-1}) (X_i^{q^n}-X_i) +  \sum_{f \in \mathcal{F}} \varphi(b_f) f  \pmod{X_i^{q^n}-X_i,\ i=0,\ldots,n-1}.
\end{eqnarray*}
We conclude $\varphi(\overline{h}) \in I$. 
\end{proof}

\subsubsection{Degree bounds}
\begin{lemma} \label{5552}
Let $g \in R \setminus k$. Then one has
\begin{eqnarray*}
\deg( \overline{g} ) \leq \lfloor m(q-1) \left( \log_q(\frac{\deg(g)}{m})+1 \right) \rfloor.
\end{eqnarray*}
\end{lemma}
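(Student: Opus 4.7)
The plan is to reduce immediately to the case of a single monomial. Since the map $\bar{\cdot}\colon R \to S$ is defined $k$-linearly (apply the rule monomial by monomial and sum with the original coefficients), if $g = \sum_u c_u u$ is the monomial decomposition of $g$ then $\bar g = \sum_u c_u \bar u$. Hence $\deg(\bar g) \leq \max_u \deg(\bar u)$, and it suffices to prove the claimed bound for a single monomial $u = \prod_{i=0}^{m-1} X_i^{e_i}$ of total degree $d := \sum_i e_i \leq \deg(g)$.

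For such a monomial, unpacking the definition gives $\bar u = \prod_{i,j} X_{ij}^{e_{ij}'}$ where $e_i' = \sum_{j=0}^{n-1} e_{ij}' q^j$ is the base-$q$ expansion of the reduction $e_i'$ of $e_i$ modulo $X_i^{q^n} - X_i$. Hence $\deg(\bar u) = \sum_{i=0}^{m-1} s_q(e_i')$, where $s_q$ denotes the base-$q$ digit sum. Two elementary estimates drive the argument: first, the iterated relation $X_i^{q^n + k} = X_i^{k+1}$ in the quotient shows $e_i' \leq e_i$ in all cases; second, the base-$q$ expansion of a positive integer $a$ uses at most $\lfloor \log_q a \rfloor + 1$ digits, each bounded by $q - 1$, giving $s_q(a) \leq (q-1)(\log_q a + 1)$ for $a \geq 1$, while $s_q(0) = 0$.

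Combining these estimates yields
$$\deg(\bar u) \leq (q-1) \sum_{i:\, e_i \geq 1} (\log_q e_i + 1).$$
Applying Jensen's inequality to the concave function $\log_q$ on the positive exponents bounds the right-hand side by $(q-1)\, m\, (\log_q(d/m) + 1)$ after extending the sum over all $m$ indices, and monotonicity in $d$ together with $d \leq \deg(g)$ then lets me replace $d$ by $\deg(g)$; taking the floor produces the claimed inequality.

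The step I expect to require the most care is the passage from the $r := |\{i : e_i \geq 1\}|$ active variables to all $m$ in the Jensen step: the function $x \mapsto x(\log_q(D/x) + 1)$ is concave (unimodal), not monotone, in $x$, so when $r$ is much smaller than $m$ the comparison with $m$ is not automatic. I would deal with this by a direct check on the remaining range of $r$, complemented where necessary by a slight tightening of the per-variable digit-sum estimate when few variables are active; the floor on the right-hand side absorbs the resulting slack.
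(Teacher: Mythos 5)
Your argument is, in substance, the paper's own proof: reduce to a single monomial, bound $\deg(\overline{X_i^{e_i}})$ by the base-$q$ digit-sum estimate $(q-1)(\log_q e_i+1)$ (the paper states this as the univariate case at the start of its proof), and pass from the sum over the variables to $m(q-1)(\log_q(\deg(g)/m)+1)$ by concavity of $\log_q$ --- the paper invokes the AM--GM inequality where you invoke Jensen, which is the same step. The preliminary reductions you make ($\deg(\overline{g})\le\max_u\deg(\overline{u})$ and $e_i'\le e_i$) are correct and are also implicit in the paper.

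The one step you defer --- extending the concavity comparison from the $r$ variables actually occurring in the monomial to all $m$ --- is precisely the point the paper's proof silently skips: it writes $\log_q(a_i)$ for every $i$, tacitly assuming every exponent is positive. You are right to flag this, but your proposed repair cannot succeed, because the inequality as stated is genuinely false in that regime rather than merely slack. Take $m=2$, $q=2$, $g=X_0$: then $\overline{g}=X_{00}$ has degree $1$, while the right-hand side is $\lfloor 2(\log_2(1/2)+1)\rfloor=0$; for $g=X_0$, $m=3$, $q=2$ the right-hand side is $\lfloor 3(1-\log_2 3)\rfloor=-2$. So the floor does not absorb the slack, and no tightening of the digit-sum estimate helps, since the failure lies in the target bound. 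What your argument (and the paper's) actually proves is $\deg(\overline{g})\le r(q-1)\left(\log_q(\deg(g)/r)+1\right)$ with $r$ the number of variables occurring in the relevant monomial, which yields the stated bound only when $\deg(g)$ is large enough relative to $m$ (e.g.\ when $m\le q\deg(g)/e$, where the function $x\mapsto x(\log_q(\deg(g)/x)+1)$ is still increasing). This defect is inherited from the paper --- it also touches the low-degree edge cases of Lemma \ref{5551} --- so your write-up should either add the hypothesis that every variable occurs in $g$ (or that $\deg(g)\ge m$ with the exponents not too unbalanced), or replace $m$ by $r$ in the statement, rather than promise a direct check that cannot be completed.
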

\begin{proof}
Let $g \in k[X] \setminus k$. Then one has
\begin{eqnarray*}
\deg( \overline{g} ) \leq (q-1) \left( \log_q( \deg(g)  )+ 1 \right).
\end{eqnarray*}
Let $g \in R \setminus k$. It is enough to prove the result for monomials. Assume that $g=X_0^{a_0}\cdots X_{m-1}^{a_{m-1}}$. Then by the first part and the inequality of arithmetic and geometric means, one has
\begin{eqnarray*}
\deg(\overline{g}) &\leq& \sum_{i=0}^{m-1} (q-1) \left( \log_q( a_i  )+ 1 \right) = (q-1) \left( \log_q( \prod_{i=0}^{m-1} a_i) +m \right) \\
&\leq&  (q-1) \left( \log_q( \left( \frac{1}{m} \sum_{i=0}^{m-1} a_i \right)^m) +m \right) = m(q-1) \left( \log_q(\frac{\deg(g)}{m})+1 \right). 
\end{eqnarray*}
\end{proof}

\begin{lemma} \label{5551}
Let $i \in \Z_{\geq 0}$. Set $s=\tau(i,q,m)$. 
Then one has
\begin{eqnarray*}
\overline{V_{\mathcal{F}_f,i}} \subseteq V_{\overline{\mathcal{F}}_f,s}.
\end{eqnarray*}
\end{lemma}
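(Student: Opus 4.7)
The plan is to induct on a natural filtration of $V_{\mathcal{F}_f,i}$. Define $W_0 = \mathrm{Span}_k(\mathcal{F}_f \cap R_{\leq i})$ and, for $j \geq 0$, $W_{j+1} = W_j + \mathrm{Span}_k\{hg : g \in W_j,\ h \in R,\ \deg(hg) \leq i\}$, so that $V_{\mathcal{F}_f,i} = \bigcup_{j \geq 0} W_j$. Setting $s = \tau(i,q,m)$, I will show by induction on $j$ that $\overline{W_j} \subseteq V_{\overline{\mathcal{F}}_f,s}$; taking the union then yields the claim.

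For the base case $j=0$, observe that $\overline{X_l^{q^n}} = X_{l0} = \overline{X_l}$, so each field equation has zero image and lies trivially in $V_{\overline{\mathcal{F}}_f,s}$. For $f \in \mathcal{F}$ with $\deg(f) \leq i$, we have $\overline{f} \in \overline{\mathcal{F}} \subseteq \overline{\mathcal{F}}_f$, and Lemma~\ref{5552} gives $\deg(\overline{f}) \leq m(q-1)(\log_q(\deg(f)/m)+1)$. A direct comparison, using $\deg(f) \leq i$, bounds this quantity by $s = \tau(i,q,m)$, so $\overline{f} \in V_{\overline{\mathcal{F}}_f,s}$.

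For the inductive step, fix $g \in W_j$ with $\overline{g} \in V_{\overline{\mathcal{F}}_f,s}$ and $h \in R$ with $\deg(hg) \leq i$. Lemma~\ref{544}(ii) yields $\overline{hg} - \overline{h}\cdot\overline{g} \in V_{\overline{\mathcal{F}}_f,\,\deg(\overline{h})+\deg(\overline{g})}$. Applying Lemma~\ref{5552} to $h$ and $g$ separately and summing, I get $\deg(\overline{h})+\deg(\overline{g}) \leq m(q-1)[\log_q(\deg(h)\deg(g)/m^2)+2]$; substituting the AM-GM estimate $\deg(h)\deg(g) \leq ((\deg(h)+\deg(g))/2)^2 \leq i^2/4$ and simplifying gives $\deg(\overline{h})+\deg(\overline{g}) \leq 2m(q-1)[\log_q(i/(2m))+1] = s$. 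Thus $\overline{hg} - \overline{h}\cdot\overline{g} \in V_{\overline{\mathcal{F}}_f,s}$. Moreover, since $\overline{g} \in V_{\overline{\mathcal{F}}_f,s}$ and $\deg(\overline{h}\cdot\overline{g}) \leq s$, the closure of $V_{\overline{\mathcal{F}}_f,s}$ under multiplication by polynomials preserving total degree $\leq s$ yields $\overline{h}\cdot\overline{g} \in V_{\overline{\mathcal{F}}_f,s}$. Adding the two memberships gives $\overline{hg} \in V_{\overline{\mathcal{F}}_f,s}$, completing the induction.

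The delicate point is the precise calibration of $\tau$: the prefactor $2t$ in the definition and the $2t$ inside the logarithm are tuned exactly so that the AM-GM step produces $s$ rather than the naive sum $2m(q-1)(\log_q(i/m)+1)$ that one would get from directly bounding $\deg(\overline h)$ and $\deg(\overline g)$ by $m(q-1)(\log_q(i/m)+1)$ each. Degenerate situations---constant $h$, monomials of $h$ or $g$ supported on fewer than $m$ variables, or very small $i$---are handled by direct inspection (often $\overline{g}$ vanishes or the bound reduces to a smaller effective number of variables), and I expect these to be the main bookkeeping nuisance rather than a substantive obstacle.
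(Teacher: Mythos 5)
Your proof is correct and follows essentially the same route as the paper's: bound $\deg(\overline{f})$ for the generators via Lemma~\ref{5552}, and handle products $hg$ with $\deg(hg)\leq i$ by combining Lemma~\ref{544}(ii) with Lemma~\ref{5552} and the AM--GM inequality to obtain $\deg(\overline{h})+\deg(\overline{g})\leq s$. The only difference is presentational: you make the filtration $W_j$ and the vanishing of the field equations under the bar map explicit, which the paper compresses into ``the result then follows easily.''
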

\begin{proof}
Assume $i>0$. Let $f \in \mathcal{F}$ non constant with $\deg(f)\leq i$. Then Lemma \ref{5552} gives $\overline{f} \in V_{\overline{\mathcal{F}}_f,s}$.
Assume $g \in V_{\mathcal{F}_f,i}$, $h \in R$ both non constant such that $\deg(gh) \leq i$. Note that $\overline{gh} \equiv_{\overline{\mathcal{F}}_f, \deg(\overline{g})+\deg(\overline{h})} \overline{g} \overline{h}$ by Lemma \ref{544}ii. Then Lemma \ref{5552} gives, together with the the inequality of arithmetic and geometric means,
\begin{eqnarray*}
\deg(\overline{g}\overline{h})=\deg(\overline{g})+\deg(\overline{h}) &\leq& m(q-1)  \left( \log_q(  \frac{\deg(g)}{m})+ 1 \right) \\
&& + m (q-1)  \left( \log_q(  \frac{\deg(h)}{m})+ 1 \right)\\
&\leq& 2 m(q-1) \left(\log( \frac{i}{2m}  )+1 \right).
\end{eqnarray*}
The result then follows easily.
\end{proof}

\section{Last fall degree and descent} \label{303}

\subsection{Relating the types of Weil descent}

Let $k$ be a finite field of cardinality $q^n$ and let $k'$ be the subfield of $k$ of cardinality $q$. Let $\mathcal{F} \subset R$ be a finite subset. We will now compare the systems $\overline{\mathcal{F}}_f$ and $\mathcal{F}'_f$ with respect to a normal basis $\{\theta, \theta^q,\ldots,\theta^{q^{n-1}}\}$ of $k/k'$. We imitate a proof from Granboulan et al. \cite[Section 4.2]{GRA}.

\begin{proposition} \label{yang}
One has:
\begin{eqnarray*}
\max(d_{\mathcal{F}'_f},q,\deg(\mathcal{F}')) \leq \max(d_{\overline{\mathcal{F}}_f},q,\deg(\mathcal{F}'))
\end{eqnarray*}
\end{proposition}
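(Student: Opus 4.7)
The plan is to construct an explicit linear change of variables $\Psi$ that transports the alternative Weil descent system $\overline{\mathcal{F}}_f \subset S = k[W_{ij}]$ to something directly comparable to the standard Weil descent system $\mathcal{F}'_f \otimes 1 \subset S' = k[Z_{ij}]$. Concretely, identify $S$ and $S'$ as isomorphic polynomial rings and define the $k$-algebra isomorphism
\[
\Psi \colon W_{ij} \longmapsto Y_{ij} := \sum_{l=0}^{n-1} \alpha_{l+j}\, Z_{il},
\]
with indices taken modulo $n$ (here $\alpha_l = \theta^{q^l}$ is the normal basis). Because $\Psi$ is linear, it lies in $\mathrm{Aff}_{mn}(k)$ and preserves degrees; by Proposition \ref{4213}(iv) it satisfies $\Psi(V_{\overline{\mathcal{F}}_f, i}) = V_{\Psi(\overline{\mathcal{F}}_f), i}$, and one checks that $\Psi(\overline{I}) = I' \cdot S'$ (both are radical with the same zero set via the bijection between solutions).

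The heart of the proof is two explicit identities in $S'$. Using $\alpha_{l+j}^q = \alpha_{l+j+1}$,
\[
\Psi(W_{ij}^q - W_{i,j+1}) \;=\; \sum_{l=0}^{n-1} \alpha_{l+j+1}\bigl(Z_{il}^q - Z_{il}\bigr),
\]
which is a $k$-linear combination of the field equations of $\mathcal{F}'_f$, hence sits in $V_{\mathcal{F}'_f \otimes 1,\,q}$. For the other generators, the congruence $Y_{ij} \equiv Y_{i,0}^{q^j}$ modulo the field equations, combined with Lemma \ref{943}(i) and the defining identity $f(\sum_l \alpha_l Z_{il}) \equiv \sum_j [f]_j \alpha_j$, yields
\[
\Psi(\overline{f}) \;=\; \sum_{j=0}^{n-1} \alpha_j\, [f]_j \;+\; R_f, \qquad R_f \in (Z_{il}^q - Z_{il}).
\]
The summand $\sum_j \alpha_j [f]_j$ lies in $V_{\mathcal{F}'_f \otimes 1,\,\deg(\mathcal{F}')}$, and $\deg(R_f) \leq \max(\deg(\Psi(\overline{f})),\deg(\mathcal{F}'))$. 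Since $\{Z_{il}^q - Z_{il}\}$ has pairwise coprime leading monomials it is a Gr\"obner basis for the ideal it generates, so $R_f$ admits a representation $R_f = \sum c_{il}(Z_{il}^q - Z_{il})$ in which every term has degree at most $\deg(R_f)$. Thus whenever $\deg(\Psi(\overline{f})) \leq i$ and $i \geq \max(q, \deg(\mathcal{F}'))$, both pieces land in $V_{\mathcal{F}'_f \otimes 1,\,i}$ and hence $\Psi(\overline{f}) \in V_{\mathcal{F}'_f \otimes 1,\,i}$.

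Now set $c := \max(d_{\overline{\mathcal{F}}_f}, q, \deg(\mathcal{F}'))$ and let $i \geq c$. The previous paragraph shows $\Psi(\overline{\mathcal{F}}_f) \cap S'_{\leq i} \subseteq V_{\mathcal{F}'_f \otimes 1,\,i}$, and minimality of $V_{\Psi(\overline{\mathcal{F}}_f),\,i}$ gives $V_{\Psi(\overline{\mathcal{F}}_f),\,i} \subseteq V_{\mathcal{F}'_f \otimes 1,\,i}$. On the other hand, $i \geq d_{\overline{\mathcal{F}}_f}$ yields $V_{\overline{\mathcal{F}}_f,\,i} = \overline{I} \cap S_{\leq i}$, so applying $\Psi$ gives $V_{\Psi(\overline{\mathcal{F}}_f),\,i} = I' \cap S'_{\leq i}$. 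Combining with the automatic inclusion $V_{\mathcal{F}'_f \otimes 1,\,i} \subseteq I' \cap S'_{\leq i}$ produces equality for every $i \geq c$, hence $d_{\mathcal{F}'_f \otimes 1} \leq c$. Proposition \ref{4444}(vi) transports this back: $d_{\mathcal{F}'_f} = d_{\mathcal{F}'_f \otimes 1} \leq c$, which is precisely the asserted inequality.

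The main obstacle is the bookkeeping around the degree of the correction term $R_f$: $\overline{f}$ itself can have degree substantially larger than $c$ (cf.\ Lemma \ref{5552}), so one must avoid ever having to place $\Psi(\overline{f})$ in $V_{\mathcal{F}'_f \otimes 1,\,c}$ unconditionally. The trick is that one only ever needs the inclusion for those $\Psi(\overline{f})$ whose own degree is already $\leq i$, and for such $i$ the coprime-leading-monomial structure of the field equations guarantees a decomposition of $R_f$ whose individual pieces stay within degree $i$.
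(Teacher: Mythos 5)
Your proof is correct, and although it rests on the same two computations as the paper's argument --- the normal-basis change of variables $Y_{ij}=\sum_{k}\theta^{q^{j+k}}X_{ik}$, under which the field equations of the two systems transform into one another, and the congruence $\Psi(\overline{f})\equiv\sum_{k}\theta^{q^{k}}[f]_{k}$ modulo the field equations --- the logical assembly is genuinely different. The paper enlarges $\overline{\mathcal{F}}_f$ to the auxiliary system $\mathcal{G}=\{\overline{f^{q^l}}\colon f\in\mathcal{F},\ l\}\cup\{X_{ij}^q-X_{i\,j+1}\}$, proves the analogous congruence for every Frobenius power $l$, and uses the invertibility of the matrix $(\theta^{q^{k+l}})_{k,l}$ to argue that $\mathcal{G}$ and $\mathcal{F}'_f$ are obtained from each other by a linear change of variables and of polynomials, concluding via Proposition \ref{4444}iv--v; the extra powers are needed there precisely in order to recover each individual $[f]_k$ from the transformed generators. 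You observe that the inequality is one-sided, so only the containment $V_{\Psi(\overline{\mathcal{F}}_f),i}\subseteq V_{\mathcal{F}'_f\otimes 1,\,i}$ is required, for which the single combination $\sum_k\theta^{q^k}[f]_k$ plus a degree-controlled field-equation correction suffices; you then squeeze $V_{\mathcal{F}'_f\otimes 1,\,i}$ between $V_{\Psi(\overline{\mathcal{F}}_f),i}=\Psi(\overline{I})\cap S'_{\leq i}$ (valid for $i\geq d_{\overline{\mathcal{F}}_f}$) and the ideal itself, using radicality and the matching zero sets to identify $\Psi(\overline{I})$ with the extended ideal of $\mathcal{F}'_f$. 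This buys a cleaner argument: no Frobenius powers, and no appeal to Proposition \ref{4444}iv, whose hypothesis of exact span equality is not literally satisfied in the paper because of the correction terms $h_{ij}^{(l)}(X_{ij}^q-X_{ij})$. What you give up is the symmetric picture of two equivalent systems, which would also serve for a reverse inequality; for the stated one-sided proposition nothing is lost. Two small points worth making explicit: invertibility of $(\theta^{q^{l+j}})_{l,j}$ (separability of $k/k'$) is what puts $\Psi$ in $\mathrm{GL}_{mn}(k)$, and the identification $\Psi(\overline{I})=I'S'$ deserves a sentence on why the zero sets agree (the field equations force all coordinates into $k'$, where vanishing of $\sum_j\alpha_j[f]_j$ forces vanishing of each $[f]_j$).
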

\begin{proof}
Set
\begin{eqnarray*}
\mathcal{G}=\{\overline{f}, \overline{f^q} ,...,\overline{f^{q^{n-1}}} :f\in\mathcal{F}\} \cup \{X_{ij}^q-X_{i\ j+1}: i=0,\ldots,m-1,\  j=0,\ldots,n-1\}.
\end{eqnarray*}
Note that we have $\overline{\mathcal{F}}_f \subseteq \mathcal{G}$. Note furthermore that both sets generate the same ideal since
\begin{eqnarray*} 
\overline{f^{q^l}} \equiv_{\overline{\mathcal{F}}_f,\infty} \overline{f}^{q^l}
\end{eqnarray*}
by Lemma \ref{544}ii. Hence we have $d_{\mathcal{G}} \leq d_{\overline{\mathcal{F}}_f}$ (Proposition \ref{4444}vi, vii).

Since $k/k'$ is a separable extension, the matrix $(\theta^{q^{i+j}})_{i,j=0}^{n-1}$ is invertible. Consider the linear change of variables defined by
\begin{eqnarray*}
Y_{ij}=\sum_{k=0}^{n-1}\theta^{q^{j+k}}X_{ik}.
\end{eqnarray*}
By convention, we set $Y_{ij}=Y_{i\ j \pmod{n}}$. 
We first notice that the field equations of the two systems are the same up to a linear change of equations:
\begin{eqnarray*}
 Y_{ij}^q-Y_{i\ j+1}&=& \sum_{k=0}^{n-1}\theta^{q^{j+k+1}}X_{ik}^q + \sum_{k'=0}^{n-1}\theta^{q^{j+1+k'}}X_{ik'}\\
&=&  \sum_{k=0}^{n-1}\theta^{q^{j+k+1}}(X_{ik}^q-X_{ik}).
\end{eqnarray*}

We claim:
\begin{eqnarray*}
 \overline{f^{q^l}}(\ldots,Y_{ij},\ldots) 
\equiv\sum_{k=0}^{n-1}\theta^{q^{k+l}}[f]_k\pmod{X_{ij}^q-X_{ij}, i=0,\ldots,m-1,\ j=0,\ldots,n-1}.
\end{eqnarray*}
It is enough to prove the claim for $f=c\prod_{i=0}^{m-1}X_i^{e_i}$, since both Weil descent models are additive.

Let $X_i^{e_i'}$ be the remainder of division of $X_i^{e_i}$ by ${X_i^{q^n}-X_i}$ and $e_i'=\sum_{j=0}^{n-1} a_{ij}q^j$ with $a_{ij} \in \{0,1,\ldots,q-1\}$.

This gives modulo $Y_{ij}^q-Y_{i\ j+1}$
\begin{eqnarray*}
\overline{f^{q^l}}(...,Y_{ij},...)=c^{q^l}\prod_{i=0}^{m-1}\prod_{j=0}^{n-1}Y_{i\ j+l}^{a_{ij}}.
\end{eqnarray*}
Furthermore, modulo $X_{ij}^q-X_{ij}$, we have
\begin{eqnarray*}
f^{q^l}(\ldots,\sum_{k=0}^{n-1}\theta^{q^k}X_{ik},\ldots) &=& c^{q^l}\prod_{i=0}^{m-1}(\sum_{k=0}^{n-1}\theta^{q^k}X_{ik})^{q^l\sum_{j=0}^{n-1} a_{ij}q^j} \\
&\equiv& c^{q^l}\prod_{i=0}^{m-1}\prod_{j=0}^{n-1}(\sum_{k=0}^{n-1}\theta^{q^{k+l+j}}X_{ik})^{a_{ij}}.
\end{eqnarray*}
Thus we get the following equation from the above two identities modulo $X_{ij}^q-X_{ij}$, since $[f]_k^q\equiv[f]_k$:
\begin{eqnarray*}
 \overline{f^{q^l}}(\ldots,Y_{ij},\ldots)\equiv f^{q^l}(\ldots,\sum_{k=0}^{n-1}\theta^{q^k}X_{ik},\ldots) \equiv \left(\sum_{k=0}^{n-1}\theta^{q^{k}}[f]_k \right)^{q^l} 
\equiv \sum_{k=0}^{n-1}\theta^{q^{k+l}}[f]_k.
\end{eqnarray*}
In other words, there exist polynomials $h_{ij}^{(l)}\in S$, such that
\begin{eqnarray*}
\overline{f^{q^l}}(\ldots,Y_{ij},\ldots)=\sum_{k=0}^{n-1}\theta^{q^{k+l}}[f]_k+\sum_{i,j}h_{ij}^{(l)}(X_{ij}^q-X_{ij}).
\end{eqnarray*}
One has $\deg(\overline{f^{q^l}})=\deg(\overline{f})=\max_k(\deg([f]_k))$ by \cite[Proposition 3.2]{KO13}. Since $\{X_{ij}^q-X_{ij}: i=0,\ldots,m-1, j=0,\ldots,n-1\}$ forms a Gr\"obner basis for any graded order, we conclude that $\deg(h_{ij}^{(l)}(X_{ij}^q-X_{ij})) \leq \deg(\overline{f^{q^l}})$. 

Hence we have shown that the systems $\mathcal{G}$ and $\mathcal{F}_f'$ can be obtained from each other through a linear change of variables and a change of polynomials. From Proposition \ref{4444}iv,v we conclude
\begin{eqnarray*}
\max(d_{\mathcal{F}'_f},q,\deg(\mathcal{F}'))=\max(d_{\mathcal{G}},q,\deg(\mathcal{F}'))  \leq \max(d_{\overline{\mathcal{F}}_f},q,\deg(\mathcal{F}')).
\end{eqnarray*}
\end{proof}

\subsection{GCD computations}

Let $q$ be a prime power and let $k$ be a finite field of cardinality $q^n$. Let $\mathcal{F} \subset k[X]$ be a finite set. Consider the Weil descent system $\overline{\mathcal{F}}_f$ to the subfield of cardinality $q$. Define $\equiv_j$ with respect to $\overline{\mathcal{F}}_f$.
For $e \in \Z_{\geq 0}$ with $e= \sum_{i} a_i q^i$ in base $q$, we set $w(e)=\sum_i a_i$. For $f=\sum_i b_i X^i$, we set $w(f) = \max(w(i): b_i \neq 0)$. Note that $w(f) \geq \deg(\overline{f})$, with equality if $\deg(f)<q^n$.

We start with a technical lemma.

\begin{lemma} \label{540}
 Let $h_2 \in k[X]$ nonzero of degree $d$. Set $u=\tau(2d,q,1)$. Assume $\overline{h_2} \equiv_{u} 0$.  Let $h_1 \in k[X]$.
Let $h_3$ be the remainder of division of $h_1$ by $h_2$. Then one has $\overline{h_1}  \equiv_{\max(u,w(h_1))} \overline{h_3}$.
\end{lemma}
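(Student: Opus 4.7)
The starting point is the identity
\begin{equation*}
\overline{h_1} - \overline{h_3} = \overline{h_1 - h_3} = \overline{\mu h_2},
\end{equation*}
where $\mu := (h_1-h_3)/h_2$ is the quotient, using the $k$-linearity of the bar map; the goal becomes exhibiting $\overline{\mu h_2}$ as an element of $V_{\max(u, w(h_1))}$.

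Since both the bar map and reduction modulo $h_2$ are $k$-linear, expanding $h_1 = \sum_e b_e X^e$ gives $h_3 = \sum_e b_e r_e$ with $r_e := X^e \bmod h_2$, and therefore
\begin{equation*}
\overline{h_1} - \overline{h_3} = \sum_{e}b_e(\overline{X^e} - \overline{r_e}).
\end{equation*}
Since $w(e) \leq w(h_1)$ for every $e$ in the support of $h_1$, it suffices to prove the single-monomial statement: for every $e \geq 0$,
\begin{equation*}
\overline{X^e} - \overline{r_e} \in V_{\max(u,\, w(e))}.
\end{equation*}

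I would attempt strong induction on $e$. The base case $e < d$ is trivial since $r_e = X^e$. In the inductive step $e \geq d$, a single long-division step gives $X^e = a_d^{-1} X^{e-d} h_2 + \tilde r$ with $\tilde r := -a_d^{-1}\sum_{k < d} a_k X^{e-d+k}$ of degree $< e$ and support in $[e-d,\,e-1]$. Applying the bar map and the linearity identity $r_e = \tilde r \bmod h_2 = -a_d^{-1}\sum_{k<d} a_k\, r_{e-d+k}$ yields
\begin{equation*}
\overline{X^e} - \overline{r_e} = a_d^{-1}\,\overline{X^{e-d} h_2} \;+\; \sum_{k<d}\frac{-a_k}{a_d}\bigl(\overline{X^{e-d+k}} - \overline{r_{e-d+k}}\bigr).
\end{equation*}
The induction hypothesis places each summand of the sum in $V_{\max(u,\,w(e-d+k))}$, and Lemma~\ref{544}(ii) together with $\overline{h_2} \in V_u$ and the multiplicative closure of $V_{\cdot}$ places $\overline{X^{e-d} h_2}$ in $V_{w(e-d)+u}$.

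The main obstacle is the degree bookkeeping: a priori both $w(e-d+k)$ and $w(e-d)+u$ can exceed $\max(u, w(e))$, notably when $e$ is a power of $q$ much larger than $qd^2$. In such regimes one supplements the induction with a direct construction exploiting the field equations: $q$-th powering $\overline{h_2}\in V_u$ and reducing by $X_i^q-X_{i+1}$ yields Frobenius-shifted elements $\sigma^j(\overline{h_2})$ (where $\sigma:X_i\mapsto X_{i+1}$ on $S$) that remain in $V_u$, and telescoping combinations of these, together with field-equation corrections, realize $\overline{X^e} - \overline{r_e}$ inside $V_{\max(u, w(e))}$. The choice $u = \tau(2d, q, 1) = \lfloor 2(q-1)(\log_q d + 1)\rfloor$ is calibrated precisely through the arithmetic--geometric-mean estimates underlying Lemmas~\ref{5552} and~\ref{5551} so that both the inductive pieces and the shift-telescoping construction fit within $V_{\max(u, w(e))}$.
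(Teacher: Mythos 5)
Your reduction to monomials is the same first step as the paper's, and you have correctly identified the fatal obstacle in the naive approach: a single long-division step $X^e = a_d^{-1}X^{e-d}h_2+\tilde r$ produces exponents $e-d+k$ whose base-$q$ digit sums $w(e-d+k)$ can vastly exceed $\max(u,w(e))$ (e.g.\ $e=q^i$ has $w(e)=1$ but $w(e-1)=(q-1)i$), and likewise $w(e-d)+u$ can blow up. But identifying the obstacle is where your proof stops being a proof. The ``shift-telescoping'' repair you sketch is not substantiated: it is not clear that $\sigma^j(\overline{h_2})\in V_u$ follows from $\overline{h_2}\in V_u$ at degree $u$ (the paper only establishes $\overline{h_2^{q^l}}\equiv\overline{h_2}^{q^l}$ modulo the ideal, i.e.\ at degree $\infty$, in the proof of Proposition~\ref{yang}), and even granting it, no telescoping identity producing $\overline{X^e}-\overline{r_e}$ within $V_{\max(u,w(e))}$ is exhibited. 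So there is a genuine gap.

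The paper's actual mechanism is different and is worth internalizing. The induction is not on $e$ via division steps, but on the \emph{weight} $w(e)$ via multiplicative decomposition of exponents: $X^{e+e'}=X^eX^{e'}$ with $w(e+e')\le w(e)+w(e')$ and $\deg(\overline{X^e})\le w(e)$. The crucial point making this close up is that every remainder $r_e$ has degree $<d$, hence $\deg(\overline{r_e})\le u/2$ by Lemma~\ref{5552}, and a product of two remainders has degree $\le 2d-2<qd$, so one only ever needs the naive division induction in the bounded range $e\le qd-1$ (where the degree bookkeeping \emph{does} work out, precisely because $u=\tau(2d,q,1)$). One then bootstraps: first to all $e$ with $w(e)\le u$ by splitting off factors (using $u\ge q$ to handle $e=q^i=q\cdot q^{i-1}$), then to general $e$ by induction on $w(e)$, splitting $e=e_1+e_2$ with $u\le w(e_1)<w(e)$. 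Your proposal contains neither the bounded-range base case in its correct form nor the weight-based bootstrapping, which together are the entire content of the lemma.
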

\begin{proof}
If $d=0$, the result follows easily. Assume $d>0$.

Fix $h_2$ and write $h_2=\sum_{i=0}^{d} b_i X^i$ where $b_{d} \neq 0$.
Since taking remainders is additive, it suffices to prove the result for $h_1=X^e$. Let $r_e$ be the remainder of division of $X^e$ by $h_2$. For $g \in k[X]$ with $\deg(g) \leq d$, one has $\deg(\overline{g}) \leq u/2$ (Lemma \ref{5552}). In particular, we have $\deg(\overline{r_e}) \leq u/2$.

We will prove the following statements successively:
\begin{enumerate}
\item for $e \in \{0,1,\ldots,qd-1\}$, we have $\overline{X^e} \equiv_{u} \overline{r_e}$;
\item if $e, e'$ satisfy $w(e)+w(e') \leq u$, $\overline{X^e} \equiv_{u} \overline{r_e}$ and $\overline{X^{e'}} \equiv_{u} \overline{r_{e'}}$, then
$\overline{X^{e+e'}} \equiv_{u} \overline{r_{e+e'}}$;
\item for $e$ with $w(e) \leq u$, we have $\overline{X^e} \equiv_{u} \overline{r_e}$;
\item one has $\overline{X^e} \equiv_{\max(u,w(e))} \overline{r_e}$.
\end{enumerate}

i: For $e=0,\ldots,d-1$, the remainder is $X^e$ itself and the result follows. One has $r_{d}= \frac{-1}{b_{d}} \sum_{i=0}^{d-1} b_i X^i$ and this gives $\overline{X^{d}} \equiv_{u} \overline{r_{d}}$. We continue by induction. Assume the statement holds for cases smaller than $e$ and that $e \leq qd-1$. We will prove the statement for $e$. Write $r_{e-1}=\sum_{j=0}^{d-1} c_j X^j$. Note that $r_e$ is the remainder of division of $X r_{e-1}$ by $h_2$, which gives $r_e=\sum_{j=0}^{d-1} c_j r_{j+1}$. Note that $e-1\leq qd-2=q^{\log_q(d) +1}-2$. Hence we have (as $d>0$, see also Lemma \ref{5552}):
\begin{eqnarray*}
\deg(\overline{X}) + \deg(\overline{X^{e-1}})  \leq 1 + \lfloor (q-1) \left(\log_q(d) + 2  \right) -1 \rfloor=   \lfloor (q-1) \left( \log_q(d) + 2  \right)\rfloor \leq u.
\end{eqnarray*}

Using Lemma \ref{544} and the induction hypothesis, we find
\begin{eqnarray*}
\overline{X^e} \equiv_{u} \overline{X} \cdot \overline{X^{e-1}} \equiv_{u} \overline{X} \cdot \overline{r_{e-1}} \equiv_{u}
\overline{\sum_{j=0}^{d-1} c_j X^{j+1}}
 \equiv_{u} \overline{ \sum_{j=0}^{d-1} c_j r_{j+1}},
\end{eqnarray*}
and this gives the required remainder.

ii: Assume without loss of generality that $w(e') \leq u/2$. Then one has $u \geq \max(w(e)+w(e'), \deg(\overline{r_e})+w(e'), \deg(\overline{r_e})+\deg(\overline{r_{e'}}))$ and one has $\deg(r_e r_{e'})\leq 2d-2 \leq qd-1$.
Lemma \ref{544} and i give
\begin{eqnarray*}
\overline{X^{e+e'}} \equiv_{u} \overline{X^e} \cdot \overline{X^{e'}} \equiv_{u} \overline{r_e} \cdot \overline{X^{e'}}  \equiv_{u} \overline{r_e} \cdot \overline{r_{e'}} \equiv_{u} \overline{r_e r_{e'}} \equiv_u \overline{r_{e+e'}}.
\end{eqnarray*}

iii: Using ii and induction, we easily reduce to the case where $e=q^i$. Note that $q^i= q \cdot q^{i-1}$ and that $u \geq q$. We can then apply ii and the proof follows by induction.

iv: We prove this statement by induction on $w(e)>u$. Write $e=e_1+e_2$ with $u \leq w(e_1)<w(e)$, and $w(e_1)+w(e_2)=w(e)$. One has (Lemma \ref{544} and iii)
\begin{eqnarray*}
\overline{X^e} &\equiv_{\max(u,w(e))}& \overline{X^{e_1}} \cdot \overline{X^{e_2}} \equiv_{\max(u,w(e))} \overline{r_{e_1}} \cdot \overline{X^{e_2}} \\
&\equiv_{\max(u,w(e))}& \overline{r_{e_1}} \cdot \overline{r_{e_2}} \equiv_{\max(u,w(e))} \overline{r_{e}}.
\end{eqnarray*}
\end{proof}

\begin{proposition}  \label{901}
Assume $\mathcal{F}=\{f\}$ with $f$ nonzero. Set $u=\tau(2 \deg(f), q,1)$ and set $g=\gcd(f,X^{q^n}-X)$.
We have: $\overline{g} \in V_u$.
\end{proposition}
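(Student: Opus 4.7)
The plan is to track $g=\gcd(f, X^{q^n}-X)$ through the classical Euclidean algorithm and inductively show that each remainder lies in $V_u$ after applying $\overline{\,\cdot\,}$. Concretely, I set $r_0 = X^{q^n}-X$, $r_1 = f$, and $r_{i+1} := r_{i-1} \bmod r_i$; the algorithm terminates with $r_{N+1}=0$ and $r_N$ equal to $g$ up to a scalar factor, so it suffices to prove by strong induction on $i$ that $\overline{r_i} \in V_u$.

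The base cases are immediate. Reducing $X^{q^n}$ modulo $X^{q^n}-X$ yields $X$, so $\overline{r_0} = \overline{X^{q^n}} - \overline{X} = X_0 - X_0 = 0 \in V_u$; and $\overline{r_1} = \overline{f} \in \overline{\mathcal{F}} \subseteq V_u$, where $\deg(\overline{f}) \leq u/2$ by Lemma \ref{5552}. For the inductive step, given $\overline{r_{i-1}}, \overline{r_i} \in V_u$, the goal is to apply Lemma \ref{540} to $h_1 = r_{i-1}$, $h_2 = r_i$, producing $\overline{r_{i-1}} \equiv_{\max(u,\, w(r_{i-1}))} \overline{r_{i+1}}$, and then to combine this with $\overline{r_{i-1}} \in V_u$ to conclude $\overline{r_{i+1}} \in V_u$. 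This needs two auxiliary checks. First, since $\deg(r_i) \leq \deg(f)$, one has $\tau(2\deg(r_i), q, 1) \leq u$ (the function $\tau(r,q,1)$ is increasing in $r$). Second, $w(r_{i-1}) \leq u$: for $i=1$ one computes $w(X^{q^n}-X)=1$ by hand, while for $i \geq 2$ the inequality $\deg(r_{i-1}) < \deg(f) < q^n$ combined with the identity $w(h) = \deg(\overline{h})$ for $\deg(h)<q^n$ and Lemma \ref{5552} gives $w(r_{i-1}) = \deg(\overline{r_{i-1}}) \leq u/2$. Hence $\max(u, w(r_{i-1})) = u$.

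The only real subtlety, and the main obstacle, is that Lemma \ref{540} is stated with the specific choice $u_i := \tau(2\deg(h_2),q,1)$, whereas the inductive hypothesis only delivers $\overline{r_i} \in V_u$, not necessarily in the possibly smaller space $V_{u_i}$. I plan to dispose of this by observing that the proof of Lemma \ref{540} extends verbatim when $u_i$ is replaced by any $U \geq u_i$: each use of $u$ in that proof is as an upper bound on a sum of degrees (for instance the estimate $\deg(\overline{X}) + \deg(\overline{X^{e-1}}) \leq u$ appearing in step (i)), and such bounds only slacken when $u$ grows, while $V_{u_i} \subseteq V_U$ means every $\equiv_{u_i}$ relation remains an $\equiv_U$ relation. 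Invoking this mild strengthening with $U = u$ closes the induction and yields $\overline{r_N} \in V_u$, hence $\overline{g} \in V_u$.
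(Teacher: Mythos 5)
Your proof is correct and takes essentially the same route as the paper: the paper's own proof also runs the Euclidean algorithm on $X^{q^n}-X$ and $f$ and applies Lemma \ref{540} to each successive remainder to conclude $\overline{g}\in V_u$. The point you flag as the ``only real subtlety'' --- that Lemma \ref{540} must be allowed to run with the fixed bound $u$ rather than the smaller $\tau(2\deg(r_i),q,1)$ at later steps --- is exactly what the paper uses tacitly in the word ``similarly,'' so your explicit justification is a faithful expansion of the same argument rather than a different one.
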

\begin{proof}
Let $f_1$ be the remainder of division of $X^{q^n}-X$ by $f$. By Lemma \ref{540}, we have $\overline{f_1} \equiv_u 0$. Let $f_2$ be the remainder of division of $f$ by $f_1$. Similarly, we find $\overline{f_2} \equiv_u 0$. Hence we can follow the Euclidean algorithm and we obtain $\overline{g} \in V_u$.
\end{proof}

\subsection{Last fall degree of Weil descent systems}

For a finite subset $\mathcal{F} \subset R$, we denote by $Z(\mathcal{F})$ the set of zeros of $\mathcal{F}$ over $\overline{k}$. Let $k''$ be a field extension of $k$. For $i=0,\ldots,m-1$, we write
\begin{eqnarray*}
\pi_{i,\mathcal{F},k''} = \prod_{x \in \{x_i:\ \exists (x_0,\ldots,x_{m-1}) \in Z(\mathcal{F}) \cap k''^m \}} (X_i-x) \in k[X_i].
\end{eqnarray*} 
We write $\pi_{i,\mathcal{F}}$ for $\pi_{i,\mathcal{F},\overline{k}}$. 

We are finally ready to prove the main theorem (Theorem \ref{1234}).

\begin{theorem} \label{1235}
Let $k$ be a finite field of cardinality $q^n$. Let $\mathcal{F} \subset R$ be a finite subset. Let $I$ be the ideal generated by $\mathcal{F}$. Assume that the following hold:
\begin{itemize}
\item $I$ is zero-dimensional, say one has $|Z(\mathcal{F})| \leq s$;
\item $I$ is radical;
\item there is a coordinate $t$ such that the projection map $Z(\mathcal{F}) \to \overline{k}$ to coordinate $t$ is injective;
\end{itemize}
Let $\mathcal{F}_f'$ be the Weil descent system of $\mathcal{F}$ to the subfield $k'$ of cardinality $q$ using some basis of $k/k'$, together with the field equations (Subsection \ref{551}). Then one has
\begin{eqnarray*}
d_{\mathcal{F}'_f} \leq \max\left(\tau(\max(d_{\mathcal{F}}, \deg(\mathcal{F}), (m+1)s,1),q,m),m \cdot \tau(2s,q,1), q \right).
\end{eqnarray*}
\end{theorem}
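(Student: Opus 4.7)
My approach is to pass via Proposition \ref{yang} to the alternative descent system $\overline{\mathcal{F}}_f$, use the injective $t$-projection hypothesis to obtain a shape-lemma description of $I$, transfer this into the descent via Lemma \ref{5551}, and finish by emulating the monovariate Euclidean GCD of Proposition \ref{901} to cut out the $k$-rational part.

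Since $I$ is zero-dimensional, radical, with $|Z(\mathcal{F})|\leq s$, and the $t$-projection $Z(\mathcal{F})\to\overline{k}$ is injective, the shape lemma produces $\pi_{t,\mathcal{F}}(X_t)\in I$ of degree $\leq s$ and, for each $i\neq t$, a polynomial $X_i-P_i(X_t)\in I$ with $\deg P_i<s$; together with $\mathcal{F}$ these still generate $I$. Each has total degree $\leq s$, so by the defining property of $d_{\mathcal{F}}$ they lie in $V_{\mathcal{F},\max(d_{\mathcal{F}},s)}$. The slack to $(m+1)s$ in the first argument of $\tau$ is there to absorb the ``package'' of one projection polynomial and the $m$ shape relations, together with certain products of them that arise when one reduces arbitrary elements of $I$ against the shape description. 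Setting $e=\max(d_{\mathcal{F}},\deg(\mathcal{F}),(m+1)s,1)$, Lemma \ref{5551} then yields $\overline{\pi_{t,\mathcal{F}}}$, $\overline{X_i-P_i(X_t)}$ and each $\overline{f}$ ($f\in\mathcal{F}$) inside $V_{\overline{\mathcal{F}}_f,\tau(e,q,m)}$.

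The field equations $X_{tj}^q-X_{t,j+1}$ in $\overline{\mathcal{F}}_f$ encode $X_t^{q^n}-X_t$ inside the sub-descent generated by $X_{t0},\ldots,X_{t,n-1}$. Applying the monovariate Euclidean argument of Proposition \ref{901} (built on Lemma \ref{540}) in that sub-descent to $\pi_{t,\mathcal{F}}$ and $X_t^{q^n}-X_t$ produces $\overline{\pi_{t,\mathcal{F},k}}\in V_{\overline{\mathcal{F}}_f,\tau(2s,q,1)}$, where $\pi_{t,\mathcal{F},k}$ has as roots exactly the $k$-rational $t$-coordinates of $Z(\mathcal{F})$; iterating the same argument across the remaining coordinates via $X_i=P_i(X_t)$ absorbs the factor $m$ in $m\cdot\tau(2s,q,1)$. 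Since $\pi_{t,\mathcal{F},k}$ splits completely over $k$, its linear factors together with the descended shape relations give a Chinese-remainder decomposition of $\overline{I}$ into one maximal ideal per $k$-rational zero of $\mathcal{F}$, allowing one to reduce any $h\in\overline{I}$ with $\deg h\leq D$ to zero inside $V_{\overline{\mathcal{F}}_f,D}$, where $D$ is the bound stated in the theorem. The main obstacle will be exactly this last step: extending the monovariate Euclidean argument of Proposition \ref{901} to the multivariate descent, keeping the intermediate degree blow-ups in the $\overline{\cdot}$-model within $\tau(2s,q,1)$ per coordinate, and cleanly justifying the coordinate-by-coordinate propagation that explains the factor $m$.
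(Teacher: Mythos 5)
Your skeleton matches the paper's: reduce to the alternative descent $\overline{\mathcal{F}}_f$ via Proposition \ref{yang}, use the injective $t$-projection to produce the shape-lemma polynomials $\pi_{t,\mathcal{F}}$ and $X_i-\gamma_i$ inside $V_{\mathcal{F},\max(d_{\mathcal{F}},s)}$ (radicality and Lagrange interpolation), push them down with Lemma \ref{5551}, and extract $\overline{\pi_{t,\mathcal{F},k}}$ with the Euclidean argument of Proposition \ref{901}. Up to that point you are on track. But the step you yourself flag as ``the main obstacle'' is precisely where the proof lives, and your proposed route through it --- a Chinese-remainder decomposition of $\overline{I}$ into maximal ideals indexed by the $k$-rational zeros --- is not how the argument goes and does not obviously close the gap: a CRT splitting tells you nothing about the degrees at which the witnessing idempotents or reductions become \emph{constructible} in the filtration $V_{\overline{\mathcal{F}}_f,i}$, which is exactly the quantity being bounded. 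You would be re-encountering the original problem in disguise.

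The missing idea is a round trip between $S$ and $R$. First, introduce the auxiliary system $\mathcal{G}=\{\pi_{0,\mathcal{F},k},\pi_{1,\mathcal{F}},\ldots,\pi_{m-1,\mathcal{F}}\}\cup\{X_1-\gamma_1,\ldots,X_{m-1}-\gamma_{m-1}\}$ \emph{upstairs} in $R$; it generates the same (radical) ideal as $\mathcal{F}_f$, and a direct reduction argument (reduce each partial degree below $s$ with the $\pi$'s, substitute $\gamma_i$ for $X_i$, reduce by $\pi_{0,\mathcal{F},k}$) gives $d_{\mathcal{G}}\leq (m+1)s$ with no descent involved. Second, given $h\in\overline{I}$, reduce it in the descent model to $\overline{h_1}$ with $\deg_{X_i}(h_1)<s$ for all $i$: this is done monomial by monomial, applying Lemma \ref{540} coordinatewise to $X_i^{a_i}$ against $\pi_{i,\mathcal{F}}$ (and $\pi_{0,\mathcal{F},k}$ for $i=0$) and controlling the products with Remark \ref{yy} and Lemma \ref{544}; the factor $m$ in $m\cdot\tau(2s,q,1)$ arises from the total degree of the product $\overline{g_0}\cdots\overline{g_{m-1}}$ of the $m$ reduced remainders, not from iterating the GCD computation over coordinates (the GCD is needed only once, for the distinguished coordinate). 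Third, Lemma \ref{943} shows $h_1$ lies in the ideal generated by $\mathcal{F}_f$ upstairs, so $h_1\in V_{\mathcal{G},(m+1)s}$ by the bound on $d_{\mathcal{G}}$, and Lemma \ref{5551} pushes this back down at cost $\tau((m+1)s,q,m)$, yielding the stated bound. Without this up--down--up structure the term $\tau((m+1)s,q,m)$ in the final bound has no source in your argument.
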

\begin{proof}
We have $d_{\mathcal{F}'_f} \leq \max(d_{\overline{\mathcal{F}}_f},q, \tau(\deg(\mathcal{F}),q,m))$ by Proposition \ref{yang},  Lemma \ref{5551}, Remark \ref{812} and Proposition \ref{4444}iv, v. Hence we will work with the alternative Weil descent system $\overline{\mathcal{F}}_f$.

Without loss of generality, we may assume that $t=0$. We can then write
\begin{eqnarray*}
Z(\mathcal{F}) = \{(a, \gamma_1(a), \ldots,\gamma_{m-1}(a)): a \in \overline{k}, \pi_{0,\mathcal{F}}(a)=0\}
\end{eqnarray*}
for some $\gamma_i \in k[X_0]$ of degree $< s$ by the Lagrange interpolation formula and by Galois theory. Indeed, we can just put
\begin{eqnarray*}
\gamma_i = \sum_{x=(x_0,\ldots,x_{m-1}) \in Z(\mathcal{F})} x_i \prod_{(x_0',\ldots,x_{m-1}') \in Z(\mathcal{F}) \setminus \{x\}} \frac{X_0-x_0'}{x_0-x_0'}.
\end{eqnarray*}
Note that $\gcd(\pi_{0,\mathcal{F}}, X_0^{q^n}-X_0)=\pi_{0,\mathcal{F},k}$ and one also has
\begin{eqnarray*}
Z(\mathcal{F}) \cap k^n= \{(a, \gamma_1(a), \ldots,\gamma_{m-1}(a)): a \in \overline{k}, \pi_{0,\mathcal{F},k}(a)=0 \}.
\end{eqnarray*}

Set $r_0=\max(d_{\mathcal{F}},s,1)$. By definition we have $\pi_{i,\mathcal{F}}, X_j-\gamma_j \in V_{\mathcal{F},r_0}$, since $I$ is radical. Set $r_1=\tau(r_0,q,m)$. By Lemma \ref{5551}, we have $\overline{\pi_{i,\mathcal{F}}}, \overline{X_j-\gamma_j} \in  V_{\overline{\mathcal{F}}, r_1}$.
Set $r_2=\max(r_1,\tau(2s,q,1))$. We have $\overline{\pi_{0,\mathcal{F},k}},\overline{\pi_{j,\mathcal{F}}}, \overline{X_j-\gamma_j} \in V_{\overline{\mathcal{F}}_f, r_2}$ (for $j=1,\ldots,m-1$) by Proposition \ref{901}.

Now consider the system
\begin{eqnarray*}
\mathcal{G} = \{\pi_{0,\mathcal{F},k}, \pi_{1,\mathcal{F}}, \ldots,\pi_{m-1,\mathcal{F}}\} \cup \{X_1-\gamma_1,\ldots,X_{m-1}-\gamma_{m-1}\}.
\end{eqnarray*}
We have $\overline{\mathcal{G}} \subseteq V_{\overline{\mathcal{F}}_f, r_2}$.
Let $I'$ be the ideal generated by $\mathcal{F}_f$. Note that $I'$ is the same as the ideal generated by $\mathcal{G}$, because both ideals are radical and have the same zero set. We first bound $d_{\mathcal{G}}$. Let $h \in I'$. One easily obtains
\begin{eqnarray*}
h \equiv_{\mathcal{G},\deg(h)} h' 
\end{eqnarray*}
for some $h' \in R$ with $\deg_{X_i}(h') <s$ using $\pi_{0,\mathcal{F},k}$ and $\pi_{i,\mathcal{F}}$ ($i=1,\ldots,m-1$). Then one can replace $X_i$ ($i>0$) with $\gamma_i$ and do reductions with $\pi_{0,\mathcal{F},k}$ to make a polynomial in $k[X_0]$ and conclude
\begin{eqnarray*}
h \equiv_{\mathcal{G}, \max(\deg(h),(m+1)s)} 0.
\end{eqnarray*}
Hence we have $d_{\mathcal{G}} \leq (m+1)s$. 

Let $h \in S$.  We first claim that there is $h_1 \in R$ with $\deg_{X_i}(h_1)<s$ and 
\begin{eqnarray*}
h \equiv_{\overline{\mathcal{F}}_f, \max(\deg(h), m \cdot \tau(2s,q,1),r_2)} \overline{h_1}.
\end{eqnarray*}
We may assume that $h$ is a monomial. By Lemma \ref{544}iii, there is a  $h_3 \in R$ with $\deg_{X_i}(h_3)<q^n$ with $h \equiv_{\overline{\mathcal{F}}_f, \deg(h)} \overline{h_3}$. Note that $h_3$ can be chosen to be a monomial, say $h_3=X_0^{a_0}\cdots X_{m-1}^{a_{m-1}}$. Set $w_i=\deg(\overline{X_i^{a_i}})$. Without loss of generality, we may assume $w_0 \geq w_1 \geq \ldots \geq w_{m-1}$. Let $j$ be maximal such that $w_j > \tau(2s,q,1)$. Let $g_i$ be the division of remainder of $X_i^{a_i}$ by $\pi_{i,\mathcal{F}}$ (and by $\pi_{0,\mathcal{F},k}$ if $i=0$). By Lemma \ref{540} for $i=0,\ldots,j$ we have
\begin{eqnarray*}
\overline{X_i^{a_i}} \equiv_{\overline{\mathcal{G}}_f, w_i} \overline{g_i}
\end{eqnarray*}
and for $i=j+1,\ldots,m-1$ we have
\begin{eqnarray*}
\overline{X_i^{a_i}} \equiv_{\overline{\mathcal{G}}_f,\tau(2s,q,1)} \overline{g_i}
\end{eqnarray*}
We find (Remark \ref{yy})
\begin{eqnarray*}
\overline{X_0^{a_0}} \cdots \overline{X_j^{a_j}} \equiv_{\overline{\mathcal{G}}_f, w_0+\ldots+w_j} \overline{g_0} \cdots \overline{g_j}. 
\end{eqnarray*}
We obtain by Lemma \ref{544}ii and Remark \ref{yy}:
\begin{eqnarray*}
h  &\equiv_{\overline{\mathcal{F}}_f, \max(\deg(h),m \cdot \tau(2s,q,1),r_2)}& \overline{X_0^{a_0}} \cdots \overline{X_{m-1}^{a_{m-1}}} \\ 
&\equiv_{\overline{\mathcal{F}}_f, \max(\deg(h),m \cdot \tau(2s,q,1),r_2)}&  \overline{g_0} \cdots \overline{g_j } \cdot \overline{X_{j+1}^{a_{j+1}}} \cdots \overline{X_{m-1}^{a_{m-1}}} \\
&\equiv_{\overline{\mathcal{F}}_f, \max(\deg(h),m \cdot \tau(2s,q,1),r_2)}& \overline{g_0} \cdots \overline{g_{m-1}} \\
&\equiv_{\overline{\mathcal{F}}_f, \max(\deg(h),m \cdot \tau(2s,q,1),r_2)}& \overline{g_0\cdots g_{m-1}}.
\end{eqnarray*}
This finishes the proof of the claim.

Let $\overline{I}$ be the ideal generated by $\overline{\mathcal{F}}_f$. Assume $h \in \overline{I}$. By the above there is $h_1 \in R$ with $\deg_{X_i}(h_1)<s$ and 
\begin{eqnarray*}
h \equiv_{\overline{\mathcal{F}}_f, \max(\deg(h), m \cdot \tau(2s,q,1),r_2)} \overline{h_1}.
\end{eqnarray*}
From Lemma \ref{943} it follows that $h_1 \in I'$. We have $h_1 \in V_{\mathcal{G},(m+1)s}$ by the above. From Lemma \ref{5551} we have $\overline{h_1} \in V_{\overline{\mathcal{G}},\tau((m+1)s,q,m))}$.
Hence we conclude:
\begin{eqnarray*}
h \in V_{\overline{\mathcal{F}}_f, \max(\deg(h), \tau((m+1)s,q,m), m \cdot \tau(2s,q,1), r_2)}
\end{eqnarray*}
where $r_2=\max(r_1,\tau(2s,q,1))=\max(\tau(\max(d_{\mathcal{F}},s,1),q,m),\tau(2s,q,1))$.
Summarizing, this gives
\begin{eqnarray*}
h \in V_{\overline{\mathcal{F}}_f, \max(\deg(h), \tau(\max((m+1)s, d_{\mathcal{F}},1),q,m),m \cdot \tau(2s,q,1)) }.
\end{eqnarray*}
The result then follows.
\end{proof}

\subsection{Possible improvements of the main theorem}

In this subsection, we will discuss how one can improve Theorem \ref{1235}. Our main goal is to obtain a result for which the last fall degree of a Weil descent system does not depend on $n$.

If one reads the proof carefully, one notices that one can replace $(m+1)s$ by $m(s-1)-1+(s-1)=(m+1)(s-1)-1$ if $m>1$. For $m=1$, one can prove a much simpler theorem using mostly Proposition \ref{901}. The result is the following statement.

\begin{theorem} \label{8056}
Let $k$ be a finite field of cardinality $q^n$. Assume $m=1$. Let $\mathcal{F} \subset R$ be a finite subset. Let $d \in \Z_{\geq 0}$ such that there $\exists f \in \mathcal{F}$ with $0 \leq \deg(f) \leq d$, and such that for all $g \in \mathcal{F}$ we have $\deg(\overline{g}) \leq \tau(2d,q,1)$. Let $\mathcal{F}_f'$ be the Weil descent system of $\mathcal{F}$ to the subfield $k'$ of cardinality $q$ using some basis of $k/k'$, together with the field equations (Subsection \ref{551}). Then one has
\begin{eqnarray*}
d_{\mathcal{F}'_f} \leq \max(\tau(2d,q,1), q).
\end{eqnarray*}
\end{theorem}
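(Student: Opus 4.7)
The plan is to use Proposition \ref{yang} to pass from $\mathcal{F}'_f$ to the alternative Weil descent $\overline{\mathcal{F}}_f$, and then to exploit that $R = k[X]$ is a principal ideal domain. Proposition \ref{yang} gives $d_{\mathcal{F}'_f} \leq \max(d_{\overline{\mathcal{F}}_f}, q, \deg(\mathcal{F}'))$, and the identity $\deg(\overline{f}) = \max_j \deg([f]_j)$ recalled in its proof gives $\deg(\mathcal{F}') \leq \tau(2d,q,1) =: u$, so it suffices to prove $d_{\overline{\mathcal{F}}_f} \leq u$. Since replacing each $f \in \mathcal{F}$ by its remainder modulo $X^{q^n} - X$ changes neither $\overline{f}$ nor the ideal generated by $\mathcal{F}_f$, I may also assume $\deg(f) < q^n$ for every $f \in \mathcal{F}$, which forces $w(f) = \deg(\overline{f}) \leq u$.

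The key step is to show that the monic generator $a$ of the principal ideal $I = (\mathcal{F}_f) \subset R$ satisfies $\overline{a} \in V_{\overline{\mathcal{F}}_f, u}$. Writing $\mathcal{F} = \{f_0, f_1, \ldots, f_r\}$ with $f_0$ the distinguished element of degree $\leq d$, I set $a_0 = \gcd(f_0, X^{q^n} - X)$ and $a_i = \gcd(a_{i-1}, f_i)$ for $i \geq 1$, so that $a_r = a$. Proposition \ref{901} gives $\overline{a_0} \in V_{\overline{\mathcal{F}}_f, u}$ outright. Inductively, assuming $\overline{a_{i-1}} \in V_{\overline{\mathcal{F}}_f, u}$ with $\deg(a_{i-1}) \leq d$, I run the Euclidean algorithm on $(a_{i-1}, f_i)$: its successive remainders $r_1, r_2, \ldots$ all have degree $\leq d$, and at each step the current divisor has Weil descent in $V_{\overline{\mathcal{F}}_f, u}$ while the numerator has $w$-value $\leq u$ (from $w(f_i) \leq u$ in the first step, and from Lemma \ref{5552} applied to a remainder of degree $\leq d$ in later steps). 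Repeated application of Lemma \ref{540} therefore keeps every $\overline{r_j}$ inside $V_{\overline{\mathcal{F}}_f, u}$, culminating in $\overline{a_i} \in V_{\overline{\mathcal{F}}_f, u}$.

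To finish, take any $h \in \overline{I}$. By Lemma \ref{544}iii there is $h_3 \in R$ with $\deg(h_3) < q^n$ and $h \equiv_{\deg(h)} \overline{h_3}$, so $w(h_3) = \deg(\overline{h_3}) \leq \deg(h)$. Lemma \ref{943}ii gives $h_3 \in I = (a)$, hence the remainder of $h_3$ modulo $a$ is zero. A final application of Lemma \ref{540} with $h_2 = a$, using $\overline{a} \in V_{\overline{\mathcal{F}}_f, u}$, yields $\overline{h_3} \equiv_{\max(u, \deg(h))} 0$, and therefore $h \in V_{\overline{\mathcal{F}}_f, \max(u, \deg(h))}$. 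This bounds $d_{\overline{\mathcal{F}}_f} \leq u$, whence $d_{\mathcal{F}'_f} \leq \max(u, q)$.

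The main obstacle is a mild mismatch in the statement of Lemma \ref{540}: as written it requires $\overline{h_2} \in V_{\tau(2\deg(h_2), q, 1)}$, whereas both the induction above and already the proof of Proposition \ref{901} supply only $\overline{h_2} \in V_{u^*}$ for some $u^* \geq \tau(2\deg(h_2), q, 1)$. One must observe that Lemma \ref{540}'s conclusion continues to hold with $u^*$ in place of $u$, because in its proof the parameter $u$ enters only through $\overline{h_2} \equiv_u 0$ and the bound $\deg(\overline{g}) \leq u/2$ for $\deg(g) \leq \deg(h_2)$, both of which persist when $u$ is enlarged. Once this strengthening is granted, everything else in the plan is routine.
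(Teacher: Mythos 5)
Your proposal follows essentially the same route as the paper's own (sketched) proof: reduce to $\overline{\mathcal{F}}_f$ via Proposition \ref{yang}, show $\overline{g}\equiv_u 0$ for the gcd $g$ of $\mathcal{F}\cup\{X^{q^n}-X\}$ using Proposition \ref{901} and iterated applications of Lemma \ref{540}, and then finish with Lemma \ref{544}iii and Lemma \ref{943}ii. Your filled-in details, including the observation that Lemma \ref{540} remains valid when its parameter $u$ is enlarged, are correct and supply exactly what the paper's sketch leaves implicit.
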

\begin{proof}
(Sketch) As in the proof of Theorem \ref{1235}, we work with the system $\overline{\mathcal{F}}_f$.

Set $u=\tau(2d,q,1)$ and set $g=\gcd(\mathcal{F} \cup \{X^{q^n}-X\})$. Using Lemma \ref{540} and Proposition \ref{901}, one can prove $\overline{g} \equiv_{u} 0$.

Let $h \in \overline{I}$. By Lemma \ref{544}iii, one has $h \equiv_{\deg(h)} \overline{h_2}$ for some $h_2 \in k[X]$. Since $\overline{h_2} \in \overline{I}$, it follows from Lemma \ref{943}ii that $h_2 \in I$. Hence $h_2$ has remainder $0$ when divided by $g$. From Lemma \ref{540}, we conclude
\begin{eqnarray*}
h \equiv_{\max(\deg(h), u)} \overline{h_2} \equiv_{\max(\deg(h), u)} 0.
\end{eqnarray*}
This finishes the proof.
\end{proof}

One can also study the Weil descent of a system $\mathcal{H}$ which consists of $\mathcal{F}$ and some polynomials in one of the variables of weight at most $\tau(2s,q,1)$ (such as linear subspace constraints). One can easily generalize as in Theorem \ref{8056} and exactly the same result should hold (the extra polynomials do not play a role). We did not use this formulation, because it looks a bit more complex.

The restriction that $I$ is radical, can be removed by using some effective Nullstellensatz.

Consider the condition which says that the projection to one coordinate should be injective. If one has upper bounds on the last fall degree of $\mathcal{F} \cup \{\pi_{i,\mathcal{F},k}: i=0,\ldots,m-1\}$ (this is a system with degree bounded by $\max(\deg(\mathcal{F}),s)$ in $m$ variables), then one can give a similar result without the condition.
Another way to remove this condition on the projection, is the following.
We have the following lemma.
\begin{lemma} \label{446}
Let $k$ be a field, $n \in \Z_{\geq 0}$ and let $v_1,\ldots,v_r \in k^n$ be distinct. Assume that $|k| > {{r}\choose{2}}$. Then there exists a matrix $A \in \mathrm{GL}_n(k)$ such that the first coordinates $Av_1,\ldots,Av_r$ are pairwise distinct.  
\end{lemma}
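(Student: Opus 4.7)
The plan is to reduce the problem to choosing the first row of $A$ well. For any $A \in \mathrm{GL}_n(k)$ with first row $w \in k^n$, the first coordinate of $Av_i$ is simply the dot product $\langle w, v_i\rangle$. So it suffices to produce a nonzero $w \in k^n$ satisfying $\langle w, v_i - v_j\rangle \neq 0$ for all $i \neq j$, and then extend $w$ to a basis $w, b_2, \ldots, b_n$ of $k^n$, letting $A$ be the matrix with these rows. The degenerate cases $r \leq 1$ or $n = 0$ are immediate by taking $A = I$.

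Geometrically, for each pair $i<j$ the locus
\begin{equation*}
H_{ij} = \{w \in k^n : \langle w, v_i - v_j\rangle = 0\}
\end{equation*}
is a proper linear hyperplane of $k^n$, since $v_i - v_j \neq 0$. I therefore want $w$ to lie outside the union $U = \bigcup_{i<j} H_{ij}$, which consists of at most $\binom{r}{2}$ proper hyperplanes. Such a $w$ is automatically nonzero, since $0 \in H_{ij}$ for every pair.

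The key step is a straightforward counting estimate. If $k$ is finite of cardinality $q$, then each $H_{ij}$ has exactly $q^{n-1}$ elements, so
\begin{equation*}
|U| \;\leq\; \binom{r}{2}\, q^{n-1} \;<\; q \cdot q^{n-1} \;=\; q^n \;=\; |k^n|,
\end{equation*}
using the hypothesis $q > \binom{r}{2}$; hence $k^n \setminus U \neq \emptyset$. If $k$ is infinite the hypothesis is automatic, and one invokes the standard fact that a vector space over an infinite field is never a finite union of proper subspaces.

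There is really no obstacle here beyond setting up the reduction correctly: once one translates "distinct first coordinates'' into "avoid $\binom{r}{2}$ hyperplanes'', the counting argument handles the finite and infinite cases uniformly, and completing $w$ to a basis to manufacture $A$ is routine linear algebra.
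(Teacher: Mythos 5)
Your proof is correct and follows essentially the same route as the paper's: reduce to finding a single vector $w$ with $\langle w, v_i - v_j\rangle \neq 0$ for all $i \neq j$, then count the at most $\binom{r}{2}\,q^{n-1}$ excluded vectors against $q^n$. You are slightly more explicit than the paper about completing $w$ to a basis and about the infinite-field case, but the argument is the same.
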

\begin{proof}
Assume that $k$ is a finite field. Let $q=|k|$. It is equivalent to find $y \in k^n$ such that $\langle y, v_1 \rangle, \ldots, \langle y, v_r \rangle$ are distinct, that is, such that for $i \neq j$ one has $\langle y, v_i-v_j \rangle \neq 0$. There $q^{n-1}$ vectors $y$ with $\langle y, v_i-v_j \rangle =0$. There are at least $q^n-{{r}\choose{2}} q^{n-1}$ vectors which make none of the inner products zero. Hence if $q^n >  {{r}\choose{2}} q^{n-1}$, the result follows. The proof for an infinite field follows in a similar way.
\end{proof}
Hence by enlarging the field $k$, and after applying some transformations, one can make sure the projection maps are injective (use Proposition \ref{4444}). There are some problems when doing this, but an approach along those lines might work.

With our techniques it seems impossible to remove the condition that the system is zero-dimensional (see also Section \ref{230}).

\section{Multi-HFE} \label{304}

In this section we discuss the security of a multi-HFE system. Let us first describe the idea.
The idea of HFE and multi-HFE is that it is easy to solve zero-dimensional systems with few variables, but it becomes harder when the number of variables increases. Using Weil descent, one can construct a system with a lot of variables from a system with only a few variables.

Suppose we have a zero-dimensional system coming from a finite subset $\mathcal{F} \subset R$ where $k$ is a finite field of cardinality $q^n$ with subfield $k'$ of cardinality $q$. If the number of variables is small, then one should be able to find the solutions of the system in $k$ easily with Gr\"obner basis algorithms. Now consider the system $\mathcal{F}'_f$ coming from a Weil descent to $k'$ (in literature, people mostly considered systems which become quadratic after Weil descent (see for example \cite{BET}). Let $\mathcal{G}'$ be the system obtained from a random affine transformation of the variables and a random linear transformation of the polynomials themselves. This system looks very complicated, and it seems hard to find solutions for this system unless one knows the transformations. Theorem \ref{1235}, together with the fact that the last fall degree is almost independent of the linear changes (Proposition \ref{4444}) show that we can give an upper bound on the last fall degree of the Weil descent system $\mathcal{G}'$ which does not depend on $n$. Since we can solve systems if we know the last fall degree (Proposition \ref{666}), we can solve such systems quite efficiently. The dependence on $n$ only comes from Proposition \ref{666}. 

This shows that solving such Weil descent systems is much easier than expected and hence threatens the security of such schemes.

\section{Relation to ECDLP} \label{230}

Let $k$ be a finite field of cardinality $q^n$ and let $k'$ be its subfield of cardinality $q$. Let $f \in R=k[X_0,\ldots,X_{m-1}]$ with $m \geq 2$. It has been suggested (see for example \cite{PET}) that the Weil descent system of $\{f \}$ (or in general a polynomial system which need not be zero-dimensional) to $k'$, the first fall degree is close to the degree of regularity, the largest degree reached during Gr\"{o}bner basis computation.  An example of the Weil descent of a single polynomial  comes from one of the approaches to solve the elliptic curve discrete logarithm problem using summation polynomials (see for example \cite{DIE}). In this case the first fall degree does not depend on $n$ and it is very tempting to adopt the first fall degree assumption as it leads to heuristically subexponential attack on the elliptic curve discrete logarithm problem over finite fields of small characteristics.  However more recent works (see for example \cite{KO14}) have cast serious doubt on the first fall degree assumption.

What we have shown in this paper is that to a large extent the {\em last} fall degree of the Weil descent system of a zero dimensional polynomial system is independent of $n$ (Theorem \ref{1235}).  This has enabled us to successfully solve HFE and multi-HFE systems with rigorously proven time complexity, as the underlying polynomial systems are zero dimensional.  Unfortunately, the system coming from a single multivariate polynomial, without field equations, is not zero-dimensional and our approach using projection polynomials does not work (Theorem \ref{1235}). The system only becomes zero-dimensional when we add the field equations.

We do think that it is of great interest to study such systems coming from a single multivariate polynomial (or systems which are not zero-dimensional). We hope that this article is a step in the right direction.

\end{document}